\def\issn{{\sc ISSN} 1930-1235: }
\def\issueyear{2007}
\newtheorem{thm}{Theorem}
\newtheorem{prop}[thm]{Proposition}
\newtheorem{lem}[thm]{Lemma}
\newtheorem{rem}[thm]{Remark}
\newtheorem{cor}[thm]{Corollary}
\theoremstyle{definition}
\theoremstyle{remark}
\newcommand{\ch}[2]
{\begin{bmatrix}
 #1 \\
 #2\\
\end{bmatrix}}
\newcommand{\chr}[4]
{\begin{bmatrix}
 #1 & #2\\
 #3 & #4
\end{bmatrix}}
\newcommand{\chs}[6]
{\begin{bmatrix}
 #1 & #2 & #3\\
 #4 & #5 & #6
\end{bmatrix}}
\def\Z{\mathbb Z}
\def\Q{\mathbb Q}
\def\R{\mathbb R}
\def\C{\mathbb C}
\def\P{\mathbb P}
\def\H{\mathcal H}
\def\M{{\mathcal M}}
\def\A{\mathcal A}
\def\X{\mathcal X}
\def\L{\mathcal L}
\def\m{\mathfrak m}
\def\an{\mathfrak a}
\def\bn{\mathfrak b}
\def\en{\mathfrak e}
\def\hn{\mathfrak h}
\def\xs{{\Bbb o}}
\def\J{\mbox{Jac }}
\def\<{\langle}
\def\>{\rangle}
\def\a{\alpha}
\def\b{\beta}
\def\t{\tau}
\def\s{\sigma}
\def\T{\theta}
\def\d{{\delta }}
\def\O{\Omega}
\def\om{\omega}
\def\e{\eta}
\def\G{\Gamma}
\def\l{\lambda}
\def\iso{{\, \cong\, }}
\def\embd{\hookrightarrow}
\def\Aut{\mbox{Aut}}
\title{Thetanulls of cyclic curves of small genus}
\author{E. Previato, T. Shaska, \and G. S. Wijesiri}
\thanks{$\star$ The first author was supported in part by the NSF grant NSF-DMS-0205643}
\thanks{$\star \star$  The second author was partially supported by an NSF grant and by the NATO grant ICS.EAP.ASI. No. 982903. }
\subjclass[2000]{14H32, 14H37, 14K25}
\keywords{Theta functions, algebraic curves, moduli spaces, automorphism groups.}
\begin{document}

\begin{abstract}
We study relations among the classical thetanulls of cyclic curves, namely curves $\X$ (of genus $g(\X )>1$ )
with an automorphism $\s$  such that $\s$ generates a normal subgroup of the group $G$ of automorphisms, and
$g \left( \X/ \<\s\> \right) =0$. Relations between thetanulls and branch points of the projection are the
object of much classical work, especially for hyperelliptic curves, and of recent work, in the cyclic case.
We   determine  the curves of genus 2 and 3 in the locus $\mathcal M_g (G, \textbf{C})$ for all $G$ that have
a normal subgroup $\langle\s\rangle$ as above, and all possible signatures \textbf{C}, via relations among
their thetanulls.
\end{abstract}

\maketitle

\section{Introduction}

In this paper we consider cyclic algebraic curves, over the complex numbers. These are by definition compact
Riemann surfaces $\X$ of genus $g>1$ (unless we allow singular points, as noted below, so as not attach
unnecessary qualifications to a definition or statement), admitting an automorphism $\sigma$ such that $\X
/\sigma\cong\P^1$ and $\sigma$ generates a normal subgroup of the automorphism group $Aut(\X )$ of $\X$. When
the curve is hyperelliptic, we insist that the curve have ``extra automorphisms'', in particular $\sigma$ is
not the hyperelliptic involution. Note that the condition  implies to having an equation $y^n=f(x)$ for the
curve, where $x$ is an affine coordinate on $\P^1$, $\sigma$ has order $n$, and $1, y, \sigma y,...,
\sigma^{n-1}y$ is a basis of $\C(\X )/\C (x).$  Naturally, the branch points  of \, $\pi : \X \rightarrow
\P^1$, together with the signature \textbf{C} of the cover (its monodromy up to conjugation) provide
algebraic coordinates for the curve in moduli, though the same curve could be represented in different ways.
The problem of expressing these algebraic data in terms of the transcendental (period matrix, thetanulls,
e.g.) is classical.  We use below formulas for genus-2 curves due to Rosenhein and Picard, Thomae's formulas
for hyperelliptic curves, and a recent generalization of the latter for cyclic curves with $\<s\> \cong C_3$,
where we denote by $C_n$ the cyclic group of order $n$, due to Nakayashiki \cite{NK}; several other authors
recently obtained partial generalizations to cyclic curves also. We do not aim here at a complete account of
the classical or contemporary work on these problems.

Cyclic curves are rare in the moduli space $\M_g$ of smooth curves, and it is desirable to characterize their
locus, by algebraic conditions on the equation of the curve, or by analytic conditions on its Abelian
coordinates, in other words, theta functions, and better yet, by both.  We achieve this for genera 2 and 3,
making recourse to classical formulas, some recent results of Hurwitz space theory, and symbolic
manipulation.

The contents of the paper are as follows. In section 2 we recall the notation for Riemann's theta function,
as well as classical facts on theta characteristics; we recall Frobenius' and Thomae's formulas for
hyperelliptic curves. In sections 3 and 4, respectively, we specialize to the case of genera 2 and 3, we
recall recent results on $\mathcal M_g (G, \textbf{C})$, and we calculate thetanull constraints that define
the loci of the cyclic curves, using the results we cited.  The cleanest case is the one of genus 2 and
$\langle\sigma\rangle\cong C_2$, which was classified by Jacobi who gave a condition in terms the branch
points of the hyperelliptic involution; such a condition was extended, in principle, to any curve in
$\mathcal M_g (C_n, \textbf{C})$, cf. \cite{kuhn} or \cite{Sh7}, but the algebraic equation satisfied by the
branch points would rapidly become intractable with the size of $n$.

\section{Preliminaries} In this section we give a brief description of the basic setup. All of this material
can be found in any standard book on theta functions.

Let $\X$ be a genus $g \geq 2$ algebraic curve. We choose a symplectic homology basis for $\X$, say $ \{ A_1,
\dots, A_g, B_1, \dots , B_g\},$ such that the intersection products $A_i \cdot A_j = B_i \cdot B_j =0$ and
$A_i \cdot B_j= \d_{i j},$ where $\d_{i j}$ is the Kronecker delta. We choose a basis $\{ w_i\}$ for the
space of holomorphic 1-forms such that $\int_{A_i} w_j = \d_{i j}$. The matrix $\O= \left[ \int_{B_i} w_j
\right] $ is  the \emph{period matrix} of $\X$.  The columns of the matrix $\left[ I \ | \O \right]$ form a
lattice $L$ in  $\C^g$ and the Jacobian  of $\X$ is $\J (\X) = \C^g/ L$. Let $\H_g$ be the \emph{Siegel
upper-half space}. Then $\O \in \H_g$ and there is an injection
\[ \M_g \embd \H_g/ Sp_{2g}(\Z) =: \A_g \] where $Sp_{2g}(\Z)$ is the \emph{symplectic group}.
For any $z \in \C^g$ and $\t \in \H_g$ \emph{Riemann's theta function} is defined as
\[ \T (z , \t) = \sum_{u\in \Z^g} e^{\pi i ( u^t \t u + 2 u^t z )  } \]
where $u$ and $z$ are $g-$dimensional column vectors and the products involved in the formula are matrix
products. The fact that the imaginary part of $\t$ is positive makes the series absolutely convergent over
any compact sets. Therefore, the function is analytic.  The theta function is holomorphic on $\C^g\times
\H_g$ and satisfies
\[\T(z+u,\tau)=\T(z,\tau),\quad \T(z+u\tau,\tau)=e^{-\pi i( u^t \tau u+2z^t u )}\cdot
\T(z,\tau),\]
where $u\in \Z^g$; see \cite{Mu1} for details. Any point $e \in \J (\X)$ can be written uniquely as
$e = (b,a) \begin{pmatrix} 1_g \\ \O \end{pmatrix}$, where $a,b \in \R^g.$  We shall use the notation $[e] =
\ch{a}{b}$ for the characteristic of $e$. For any $a, b \in \Q^g$, the theta function with rational
characteristics is defined as
\[ \T  \ch{a}{b} (z , \t) = \sum_{u\in \Z^g} e^{\pi i ( (u+a)^t \t (u+a) + 2 (u+a)^t (z+b) )  }. \]
When the entries of column vectors $a$ and $b$ are from the set $\{ 0,\frac{1}{2}\}$, then the
characteristics $ \ch {a}{b} $ are called the \emph{half-integer characteristics}. The corresponding theta
functions with rational characteristics are called \emph{theta characteristics}. A scalar obtained by
evaluating a theta characteristic at $z=0$ is called a \emph{theta constant}. Points of order $n$ on $\J _\X$
are called the $\frac 1 n$-\emph{periods}. Any half-integer characteristic is given by
\[
\m = \frac{1}{2}m = \frac{1}{2}
\begin{pmatrix} m_1 & m_2 &  \cdots &  m_g \\ m_1^{\prime} & m_2^{\prime} & \cdots & m_g^{\prime}  \end{pmatrix}
\]
where $m_i, m_i^{\prime} \in \Z.$  For $\gamma = \ch{\gamma ^\prime}{\gamma^{\prime \prime}} \in
\frac{1}{2}\Z^{2g}/\Z^{2g}$ we define $e_*(\gamma) = (-1)^{4 (\gamma ^\prime)^t \gamma^{\prime \prime}}.$
Then,
\[ \T [\gamma] (-z , \t) = e_* (\gamma) \T [\gamma] (z , \t).\]
We say that $\gamma$ is an  \textbf{even}  (resp.  \textbf{odd}) characteristic if $e_*(\gamma) = 1$ (resp.
$e_*(\gamma) = -1$). For any curve of genus $g$, there are $2^{g-1}(2^g+1)$ (respectively $2^{g-1}(2^g-1)$ )
even theta functions (respectively odd theta functions). Let $\an$ be another half integer characteristic. We
define $\m\, \an$ as follows.
\[
%
 \m \, \an = \frac{1}{2} \begin{pmatrix} t_1 & t_2 &  \cdots &  t_g \\ t_1^{\prime} & t_2^{\prime} & \cdots &
t_g^{\prime}
\end{pmatrix}
\]
where $t_i \equiv (m_i\, + a_i)  \mod 2$ and $t_i^{\prime} \equiv (m_i^{\prime}\, + a_i^{\prime} ) \mod 2.$

For the rest of this section we consider only characteristics $\frac{1}{2}q$ in which each of the elements
$q_i,q_i^{\prime}$ is either 0 or 1. We use the following abbreviations
\[
\begin{split}
&|\m| = \sum_{i=1}^g m_i m_i^{\prime},  \quad \quad \quad \quad \quad \quad \quad \quad \quad
|\m, \an| = \sum_{i=1}^g (m_i^{\prime} a_i - m_i a_i^{\prime}), \\
& |\m, \an, \bn| = |\an, \bn| + |\bn, \m| + |\m, \an|, \quad \quad {\m\choose \an} = e^{\pi i \sum_{j=1}^g
m_j a_j^{\prime}}.
\end{split}
\]

The set of all half integer characteristics forms a group $\G$ which has $2^{2g}$ elements. We say that two
half integer characteristics $\m$ and $\an$ are \emph{syzygetic} (resp., \emph{azygetic}) if $|\m, \an|
\equiv 0 \mod 2$ (resp., $|\m, \an| \equiv 1 \mod 2$) and three half integer characteristics $\m, \an$, and
$\bn$ are syzygetic if $|\m, \an, \bn| \equiv 0 \mod 2$.

A \emph{G\"opel group} $G$ is a group of $2^r$ half integer characteristics where $r \leq g$ such that every
two characteristics are syzygetic. The elements of the group $G$ are formed by the sums of $r$ fundamental
characteristics; see \cite[pg. 489]{Baker} for details. Obviously, a G\"opel group of order $2^r$ is
isomorphic to $C^r_2$. The proof of the following lemma can be found on   \cite[pg.  490]{Baker}.
\begin{lem}
The number of different G\"opel groups which have $2^r$ characteristics is
\[
\frac{(2^{2g}-1)(2^{2g-2}-1)\cdots(2^{2g-2r+2}-1)}{(2^r-1)(2^{r-1}-1)\cdots(2-1)}
\]
\end{lem}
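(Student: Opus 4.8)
The plan is to reinterpret Göpel groups in the language of symplectic linear algebra over $\mathbb{F}_2$ and then carry out a standard subspace count. First I would note that $\G \cong \tfrac12\Z^{2g}/\Z^{2g}$ is an $\mathbb{F}_2$-vector space of dimension $2g$ in which the operation $\m\,\an$ is precisely coordinatewise addition mod $2$, and that the pairing $(\m,\an)\mapsto |\m,\an|\bmod 2 = \sum_i (m_i^\prime a_i + m_i a_i^\prime)$ is bilinear, non-degenerate, and alternating (since $|\m,\m| = 2\sum m_i m_i^\prime\equiv 0$). In other words $(\G,|\cdot,\cdot|)$ is a symplectic space over $\mathbb{F}_2$. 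Because the self-syzygetic condition is automatic, a Göpel group of order $2^r$ is exactly an $r$-dimensional subspace of $\G$ on which the form vanishes identically, i.e.\ a totally isotropic $r$-subspace; so the lemma amounts to counting such subspaces.

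Next I would count ordered bases $(v_1,\dots,v_r)$ of totally isotropic $r$-subspaces, adjoining one vector at a time. Since the form is alternating every vector is isotropic, so $v_1$ may be any nonzero vector: $2^{2g}-1$ choices. Suppose $v_1,\dots,v_{k-1}$ are already chosen, linearly independent, spanning a totally isotropic subspace $W_{k-1}$. For $W_k=\langle v_1,\dots,v_k\rangle$ to remain totally isotropic and $v_k$ to keep the tuple independent, the admissible $v_k$ are exactly those in $W_{k-1}^{\perp}\setminus W_{k-1}$: the conditions $|v_i,v_k|\equiv 0$ for $i<k$ define $W_{k-1}^{\perp}$, of codimension $k-1$ (here non-degeneracy of the form is used, so $|W_{k-1}^{\perp}| = 2^{2g-k+1}$), while isotropy of $W_{k-1}$ gives $W_{k-1}\subseteq W_{k-1}^{\perp}$, so we discard the $2^{k-1}$ vectors of $W_{k-1}$. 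Hence there are $2^{2g-k+1}-2^{k-1} = 2^{k-1}\bigl(2^{2g-2k+2}-1\bigr)$ choices for $v_k$, and multiplying over $k=1,\dots,r$ yields $2^{\binom r2}\prod_{k=1}^{r}\bigl(2^{2g-2k+2}-1\bigr)$ ordered bases in total.

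Finally, each totally isotropic $r$-subspace has exactly $|\mathrm{GL}_r(\mathbb{F}_2)| = \prod_{k=1}^{r}(2^r-2^{k-1}) = 2^{\binom r2}\prod_{k=1}^{r}(2^k-1)$ ordered bases, so dividing gives
\[
\frac{2^{\binom r2}\,\prod_{k=1}^{r}\bigl(2^{2g-2k+2}-1\bigr)}{2^{\binom r2}\,\prod_{k=1}^{r}\bigl(2^{k}-1\bigr)} \;=\; \frac{(2^{2g}-1)(2^{2g-2}-1)\cdots(2^{2g-2r+2}-1)}{(2^r-1)(2^{r-1}-1)\cdots(2-1)},
\]
the powers of $2$ cancelling. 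The one point demanding care — and the only real obstacle — is the codimension bookkeeping at each stage: one must check that the $k-1$ functionals $x\mapsto |v_i,x|$ are linearly independent, so that $W_{k-1}^{\perp}$ genuinely has codimension $k-1$ (this is where non-degeneracy enters), and that $W_{k-1}\subseteq W_{k-1}^{\perp}$ (which is just the isotropy hypothesis). Granting these, the rest is the routine computation above; this is the argument sketched in \cite[pg.\ 490]{Baker}.
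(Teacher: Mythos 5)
Your argument is correct. Note, however, that the paper itself gives no proof of this lemma: it simply refers the reader to Baker (pg.~490), where the count is carried out in the classical language of fundamental systems of syzygetic characteristics. What you supply is the standard modern substitute for that computation: identify the group of half-integer characteristics with $\mathbb{F}_2^{2g}$ equipped with the non-degenerate alternating form $(\m,\an)\mapsto|\m,\an|\bmod 2$, observe that a G\"opel group of order $2^r$ is precisely a totally isotropic $r$-dimensional subspace (a subgroup of an elementary abelian $2$-group is automatically a subspace, and every vector is isotropic since the form is alternating), count ordered bases of such subspaces by the $W_{k-1}^{\perp}\setminus W_{k-1}$ recursion, and divide by $|\mathrm{GL}_r(\mathbb{F}_2)|$. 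The key bookkeeping points you flag — that the functionals $x\mapsto|v_i,x|$ for $i<k$ are linearly independent because $v_1,\dots,v_{k-1}$ are and the form is non-degenerate, and that $W_{k-1}\subseteq W_{k-1}^{\perp}$ by isotropy — are exactly what is needed, and the power-of-$2$ cancellation reproduces the stated quotient
\[
\frac{(2^{2g}-1)(2^{2g-2}-1)\cdots(2^{2g-2r+2}-1)}{(2^r-1)(2^{r-1}-1)\cdots(2-1)}.
\]
So your proof is a complete, self-contained derivation where the paper offers only a citation; the gain is transparency and generality (the same count works over any $\mathbb{F}_q$), while Baker's classical enumeration, which your argument mirrors in substance, is what the authors had in mind.
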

If $G$ is a G\"opel group with $2^r$ elements, then it has $2^{2g-r}$ cosets. The cosets are called
\emph{G\"opel systems} and denoted by $\an G$, $\an \in \G$. Any three characteristics of a G\"opel system
are syzygetic. We can find a set of characteristics called a basis of the G\"opel system which derives all
its $2^r$ characteristics by taking only the combinations of any odd number of characteristics of the basis.

\begin{lem}
Let $g \geq 1$ be a fixed integer, $r$ be as defined above and $\sigma = g-r.$ Then there are
$2^{\sigma-1}(2^\sigma+1)$ G\"opel systems which consist of even characteristics only and there are
$2^{\sigma-1}(2^\sigma-1)$ G\"opel systems which consist of odd characteristics. The other
$2^{2\sigma}(2^r-1)$ G\"opel systems consist as many odd characteristics as even characteristics.
\end{lem}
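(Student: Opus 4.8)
The plan is to recast the combinatorics of half-integer characteristics as linear algebra over $\Z/2$, and then read off the three counts by a short Gauss-sum argument in the spirit of \cite{Baker}. First I would regard $\G$ as a $2g$-dimensional vector space over $\Z/2$ under the operation $\m\,\an$, equipped with the nondegenerate alternating pairing $\langle\m,\an\rangle:=|\m,\an|\bmod 2$ (so that $\m,\an$ are syzygetic iff $\langle\m,\an\rangle=0$) and with the parity function $q(\m):=|\m|\bmod 2$, which by the formula of Section 2 satisfies $e_*(\m)=(-1)^{q(\m)}$. Expanding $|\m\,\an|$ directly gives the quadratic-refinement identity $q(\m\,\an)=q(\m)+q(\an)+\langle\m,\an\rangle$. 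In this language a G\"opel group $G$ of order $2^r$ is precisely a totally isotropic subspace of dimension $r$, and its G\"opel systems are exactly the $2^{2g-r}$ cosets of $G$ in $\G$.

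Next I would fix a coset $C=\an G$ and note that, for $\gamma\in G$, the quadratic identity gives $q(\an\,\gamma)=q(\an)+\ell_C(\gamma)$ where $\ell_C(\gamma):=q(\gamma)+\langle\an,\gamma\rangle$. Since $\langle\,,\,\rangle$ vanishes identically on $G$, both $q|_G$ and $\langle\an,\cdot\rangle|_G$ are $\Z/2$-linear on $G$, hence so is $\ell_C$; moreover $\ell_C$ depends only on $C$ (replacing $\an$ by $\an\,\delta$, $\delta\in G$, changes $\ell_C$ by $\gamma\mapsto\langle\delta,\gamma\rangle=0$). Two cases arise. If $\ell_C\equiv 0$, every member of $C$ has parity $q(\an)$, which is then an invariant of $C$, so $C$ consists of characteristics of a single parity. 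If $\ell_C\not\equiv 0$, it is a nonzero linear functional on $G\cong(\Z/2)^r$ and so vanishes on a hyperplane; hence $C$ contains exactly $2^{r-1}$ even and $2^{r-1}$ odd characteristics. This already establishes the last sentence of the lemma, once we know the number of such ``mixed'' systems.

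I would then count the homogeneous systems (those with $\ell_C\equiv 0$). The condition $\ell_C\equiv 0$ says $\langle\an,\cdot\rangle|_G=q|_G$ as functionals on $G$. Because $\langle\,,\,\rangle$ is nondegenerate on $\G$, the restriction map $\an\mapsto\langle\an,\cdot\rangle|_G$ is a surjection $\G\to\mathrm{Hom}(G,\Z/2)$ with kernel $G^\perp$, $\dim G^\perp=2g-r$; so the $\an$ satisfying $\ell_C\equiv 0$ form a single coset of $G^\perp$, and since $G\subseteq G^\perp$ this is $2^{(2g-r)-r}=2^{2\sigma}$ cosets of $G$. Thus there are $2^{2\sigma}$ homogeneous G\"opel systems and $2^{2g-r}-2^{2\sigma}=2^{2\sigma}(2^r-1)$ mixed ones, each of the latter with equally many even and odd characteristics by the previous paragraph — this is the final assertion.

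Finally I would split the $2^{2\sigma}$ homogeneous systems into even-only and odd-only. For a homogeneous $C$ one has $\sum_{c\in C}e_*(c)=(-1)^{q(\an)}\sum_{\gamma\in G}(-1)^{\ell_C(\gamma)}=(-1)^{q(\an)}\,2^r$, while for a mixed system this sum is $0$. Summing over all G\"opel systems, the total is $\sum_{\gamma\in\G}e_*(\gamma)=(\#\text{even char.})-(\#\text{odd char.})=2^{g-1}(2^g+1)-2^{g-1}(2^g-1)=2^g$ by the count recalled in Section 2. Hence $2^r\bigl(N_{\mathrm{ev}}-N_{\mathrm{odd}}\bigr)=2^g$, i.e. $N_{\mathrm{ev}}-N_{\mathrm{odd}}=2^{\sigma}$; together with $N_{\mathrm{ev}}+N_{\mathrm{odd}}=2^{2\sigma}$ this gives $N_{\mathrm{ev}}=2^{\sigma-1}(2^\sigma+1)$ and $N_{\mathrm{odd}}=2^{\sigma-1}(2^\sigma-1)$. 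The cases $r=0$ and $r=g$ are automatically included. The only step beyond bookkeeping is this averaging (equivalently, the input $\sum_{\G}e_*=2^g$, which says that the parity form on $\G$ has trivial Arf invariant); the dimension count $G\subseteq G^\perp$ giving $2^{2\sigma}$ homogeneous systems is the other point to get right.
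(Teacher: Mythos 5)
Your argument is correct, but it is not the paper's: the paper does not prove this lemma at all, it simply refers to Baker (pg.~492), where the enumeration is carried out in the classical language of theta characteristics. What you do instead is a self-contained modern reformulation over $\Z/2$: you identify $\G$ with a $2g$-dimensional symplectic $\Z/2$-space, note that $q(\m)=|\m|\bmod 2$ is a quadratic refinement of the pairing $|\m,\an|\bmod 2$ with $e_*=(-1)^q$, so that a G\"opel group is a totally isotropic subspace of dimension $r$ and a G\"opel system is a coset; the linear functional $\ell_C=q+\langle\an,\cdot\rangle$ on $G$ (well defined on the coset because $G$ is isotropic) cleanly separates the ``homogeneous'' systems ($\ell_C\equiv 0$, single parity) from the ``mixed'' ones (half even, half odd), the count $2^{2\sigma}$ of homogeneous systems follows from the dimension count $G\subseteq G^\perp$ with $\dim G^\perp=2g-r$, and the split of the homogeneous ones into $2^{\sigma-1}(2^\sigma+1)$ even and $2^{\sigma-1}(2^\sigma-1)$ odd comes from the character sum $\sum_{\gamma\in\G}e_*(\gamma)=2^g$ (i.e.\ the standard form has trivial Arf invariant), which is exactly the even/odd count $2^{g-1}(2^g\pm1)$ recalled in Section 2. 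All steps check out, including the refinement identity $q(\m\,\an)=q(\m)+q(\an)+|\m,\an|$ and the degenerate cases $r=0,g$ (the latter recovering Corollary~3). What your route buys is transparency and independence from the classical source, and it generalizes immediately to any quadratic refinement of the symplectic form; what the paper's citation buys is brevity and consistency with its other appeals to Baker (e.g.\ Lemma~1 and Proposition~4). The one point worth making explicit if this were to replace the citation is the nondegeneracy of the pairing on $\G$, which you use for the surjectivity of $\an\mapsto\langle\an,\cdot\rangle|_G$; that is standard for the form $\sum(m_i'a_i-m_ia_i')$.
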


\proof The proof can be found on \cite[pg. 492]{Baker}. \qed

\begin{cor}\label{numb_systems}
When $r=g$ we have only one (resp., 0) G\"opel system which consists of even (resp., odd) characteristics.
\end{cor}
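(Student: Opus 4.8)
The plan is to obtain the statement as the special case $r=g$ of the Lemma immediately preceding it, so essentially no new argument is needed beyond a substitution. When $r=g$ the quantity $\sigma=g-r$ is $0$, hence $2^\sigma=1$. Plugging $\sigma=0$ into the count $2^{\sigma-1}(2^\sigma+1)$ of G\"opel systems consisting of even characteristics only gives $\frac{1}{2}\cdot 2=1$, and plugging it into the count $2^{\sigma-1}(2^\sigma-1)$ of G\"opel systems consisting of odd characteristics only gives $\frac{1}{2}\cdot 0=0$. The one point that deserves a word of care — the only place where anything could go wrong — is that the prefactor $2^{\sigma-1}$ is not an integer at $\sigma=0$; I would note that this is harmless, since in each of the two formulas it multiplies a factor $(2^\sigma\pm 1)$ chosen precisely so that the product is integral, and the formulas of the Lemma are to be read as such products.

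I would then include a short consistency check to make the bookkeeping transparent: the remaining, genuinely mixed, G\"opel systems number $2^{2\sigma}(2^r-1)=2^g-1$, so the three classes together account for $1+0+(2^g-1)=2^g$ systems. This matches the remark after the first Lemma, where a G\"opel group of order $2^r=2^g$ is said to have $2^{2g-r}=2^g$ cosets. Thus the corollary is exactly the $\sigma=0$ instance of the Lemma, and there is no substantive obstacle to overcome; the proof is a one-line specialization, with the only subtlety being the interpretation of the two cardinality formulas when $\sigma=0$.
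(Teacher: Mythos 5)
Your proposal is correct and is exactly the argument intended by the paper, which presents the statement as an immediate corollary of the preceding lemma: setting $r=g$, so $\sigma=0$, the counts $2^{\sigma-1}(2^\sigma+1)$ and $2^{\sigma-1}(2^\sigma-1)$ give $1$ and $0$ respectively. Your remark about the integrality of the products at $\sigma=0$ and the consistency check against the total of $2^{2g-r}=2^g$ cosets are sensible but not needed.
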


\begin{prop}\label{ThetaId} The following statements are true.
\begin{equation}\label {eq1}
\T^2[\an] \T^2[\an \hn] = \frac{1}{2^{g-1}} \sum_\en  e^{\pi i |\an \en|} { \hn \choose \an \en}
\T^2[\en]\T^2[\en \hn]
\end{equation}
\begin{equation}\label {eq2}
\T^4[\an] + e^{\pi i |\an, \hn|} \T^4[\an \hn] = \frac{1}{2^{g-1}} \sum_\en  e^{\pi i |\an \en|} \{ \T^4[\en]
+ e^{ \pi i |\an, \hn|} \T^4[\en \hn]\}
\end{equation}
where $\T[e]$ is the theta constant corresponding to the characteristic $e,$  $\an $ and $ \hn $ are any half
integer characteristics and $\en$ is an even characteristic such that $|\en| \equiv |\en \hn| \mod 2$. There
are $2\cdot 2^{g-2}\, (2^{g-1} +1) $ such candidates for $\en.$
\end{prop}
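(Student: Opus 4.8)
The plan is to derive both identities from the fundamental theta transformation and duplication formulas, following the classical approach of Baker. First I would recall the basic addition formula for theta constants with characteristics, which expresses products like $\T[\an](0,\t)\T[\an\hn](0,\t)$ as a finite sum over $\Z^g/2\Z^g$ of products of theta constants; the key is to rewrite the lattice sum defining $\T^2[\an]\T^2[\an\hn]$ by splitting $\Z^g\times\Z^g$ according to the parity class of the sum and difference of the two summation variables. This produces a sum over even characteristics $\en$ with Gauss-sum coefficients, and the quadratic-form identities in the exponents collapse to the symbols $|\an\en|$ and $\binom{\hn}{\an\en}$ defined in the preliminaries. The normalization $\frac{1}{2^{g-1}}$ arises from the index $[\Z^g:2\Z^g]=2^g$ together with the constraint that forces half the terms to vanish.

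For equation \eqref{eq1} the main steps are: (i) write $\T^2[\an]\T^2[\an\hn]$ as a double lattice sum; (ii) substitute $u\mapsto u+v,\ u'\mapsto u-v$ (or the appropriate half-integer version) to separate the sum into a ``diagonal'' part carrying a new characteristic $\en$ and an inner part that evaluates to a theta constant squared; (iii) identify which $\en$ survive, namely those with $e_*(\en)=e_*(\en\hn)$, i.e.\ $|\en|\equiv|\en\hn|\bmod 2$, since the others contribute terms that cancel in pairs under $u\mapsto -u$; (iv) count them — there are $2^{g-1}(2^g+1)$ even characteristics total, and imposing $|\en|\equiv|\en\hn|$ cuts this to $2\cdot 2^{g-2}(2^{g-1}+1)$, giving the stated count. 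For equation \eqref{eq2}, I would either apply the same splitting to $\T^4[\an]$ directly, or more economically take \eqref{eq1}, specialize and add the version with $\an$ replaced by $\an\hn$: since $\T^2[\an\hn]\T^2[\an\hn\cdot\hn]=\T^2[\an\hn]\T^2[\an]$ (as $\hn^2$ is trivial mod $\Z^{2g}$), combining the two instances and using the sign $e^{\pi i|\an,\hn|}$ to track the reordering of characteristics yields the fourth-power identity. The factor $e^{\pi i|\an,\hn|}$ on both sides reflects exactly the syzygetic/azygetic sign flip that occurs when passing from the product form to the sum-of-fourth-powers form.

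The main obstacle I expect is bookkeeping the characteristic arithmetic and the quadratic-form exponents precisely: one must verify that after the change of variables the cross terms in $(u+a)^t\t(u+a)$ reorganize so that the $\t$-dependence factors cleanly into $\T^2[\en]\T^2[\en\hn]$ with no leftover $\t$-dependent coefficient, and that the remaining scalar phases assemble into the combinatorial symbols $e^{\pi i|\an\en|}$ and $\binom{\hn}{\an\en}$ rather than some other $8$th-root-of-unity expression. This is where the definitions $|\m,\an|=\sum(m_i'a_i-m_ia_i')$ and $\binom{\m}{\an}=e^{\pi i\sum m_ja_j'}$, together with the reduction rule $\m\,\an$, do all the work, and checking compatibility of the mod-$2$ reductions with the mod-$4$ phases is delicate. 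Rather than reproducing this computation in full, I would cite \cite[pg.~489--492]{Baker} for the detailed verification, having indicated the structural reason each term appears; the count ``$2\cdot 2^{g-2}(2^{g-1}+1)$ candidates for $\en$'' is then a direct consequence of Corollary~\ref{numb_systems} applied to the G\"opel-system decomposition induced by $\hn$.
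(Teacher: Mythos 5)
Your overall route coincides with the paper's: the paper offers no argument of its own, simply citing \cite[pg. 524]{Baker}, and your sketch of the classical lattice-splitting derivation, with the bookkeeping of characteristic arithmetic deferred to Baker, is exactly how that source proceeds, so there is no real divergence of method. Two details in your sketch do need repair, however. First, your ``more economical'' derivation of \eqref{eq2} from \eqref{eq1} cannot work as stated: replacing $\an$ by $\an\hn$ in \eqref{eq1} leaves the left-hand side unchanged, since $\T^2[\an\hn]\,\T^2[(\an\hn)\hn]=\T^2[\an\hn]\,\T^2[\an]$, so adding the two instances merely reproduces \eqref{eq1} and can never generate the fourth powers $\T^4[\an]$ and $\T^4[\an\hn]$; the fourth-power identity needs its own specialization of the Riemann theta formula, i.e. your first alternative (applying the same splitting directly to the fourth powers), which is what Baker in fact does. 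Second, the count $2\cdot 2^{g-2}(2^{g-1}+1)$ of admissible $\en$ is not a consequence of Corollary~\ref{numb_systems}, which treats only the extreme case $r=g$; it follows from the earlier lemma counting G\"opel systems that consist of even characteristics only, applied with $r=1$, $\sigma=g-1$: the cosets of the G\"opel group $\{0,\hn\}$ made up entirely of even characteristics number $2^{\sigma-1}(2^{\sigma}+1)=2^{g-2}(2^{g-1}+1)$, and each such coset $\{\en,\en\hn\}$ contributes two characteristics, giving the stated count. (Also note that the restriction to $\en$ with $|\en|\equiv|\en\hn| \bmod 2$ is automatic rather than a cancellation phenomenon: any term in which $\en$ or $\en\hn$ is odd vanishes because the corresponding theta constant is zero at $z=0$.)
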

\begin{proof}
For the proof, see  \cite[pg.  524]{Baker}.
\end{proof}
The statements given in the proposition above can be used to get identities among theta constants; see
section 3.

\subsection{Cyclic curves with extra automorphisms}
A normal cyclic curve is an algebraic curve $\X$ such that there exist a normal cyclic subgroup $C_m
\triangleleft \,\Aut(\X)$ such that $g( \X/{C_m}) = 0.$  Then $\bar{G} = G/C_m$ embeds as a finite subgroup
of $PGL(2,\C)$.  An affine  equation of a birational model of a cyclic curve can be given by the following
\begin{equation}\label{cyclic}
y^m = f(x) = \prod_{i=1}^s (x-\a_i)^{d_i} , \,  0 < d_i < m.
\end{equation}

Hyperelliptic curves are cyclic curves with $m=2$. Note that when $0<d_i$ for some $i$ the curve is singular.
A hyperelliptic curve $\X$ is a cover of order two of the projective line $\P^1.$ Let $z$ be the generator
(the hyperelliptic involution) of the Galois group $Gal(\X / \P^1).$ It is known that $\langle z \rangle $ is
a normal subgroup of the automorphism group $\Aut(\X)$.  Let $\X \longrightarrow \P^1$ be the degree 2
hyperelliptic projection. We can assume that infinity is  a branch point. Let
\[ B := \{\a_1,\a_2, \cdots ,\a_{2g+1} \}\]
be the set of other branch points.  Let $S = \{1,2, \cdots, 2g+1\}$ be the index set of $B$ and $\e : S
\longrightarrow \frac{1}{2}\Z^{2g}/\Z^{2g}$  be a map defined as follows;
\[
\begin{split}
\e(2i-1) & = \begin{bmatrix}
              0 & \cdots & 0 & \frac{1}{2} & 0 & \cdots & 0\\
              \frac{1}{2} & \cdots & \frac{1}{2} & 0 & 0 & \cdots & 0\\
            \end{bmatrix} \\
 \e(2i) & =\begin{bmatrix}
              0 & \cdots & 0 & \frac{1}{2} & 0 & \cdots & 0\\
              \frac{1}{2} & \cdots & \frac{1}{2} & \frac{1}{2} & 0 & \cdots & 0\\
            \end{bmatrix}
\end{split}
\]
where the nonzero element of the first row appears in $i^{th}$ column. We define  $\e(\infty) $ to be $
\begin{bmatrix}
              0 & \cdots & 0 & 0\\
              0 & \cdots & 0 & 0\\
            \end{bmatrix}$.
For any $T \subset B $, we can define the half-integer characteristic as
\[ \e_T = \sum_{a_k \in T } \e(k) .\]

Let $T^c$ denote the complement of $T$ in $B.$ Note that $\e_B \in \Z^{2g}.$ If we view $\e_T$ as an element
of $\frac{1}{2}\Z^{2g}/\Z^{2g}$ then $\e_T= \e_{T^c}.$ Let $\vartriangle$ denote the symmetric difference of
sets, that is $T \vartriangle R = (T \cup R) - (T \cap R).$ It can be shown that the set of subsets of $B$ is
a group under $\vartriangle$. We have the following group isomorphism
\[ \{T \subset B\,  |\, \#T \equiv g+1 \mod 2\} / T \cong \frac{1}{2}\Z^{2g}/\Z^{2g}.\]

For hyperelliptic curves, it is known that $2^{g-1}(2^g+1) - {2g+1 \choose g}$ of the even theta constants
are zero. The following theorem provides a condition on the characteristics in which theta characteristics
become zero. The proof of the theorem can be found  in \cite[pg.  102]{Mu2}.
\begin{thm}\label{vanishingProperty}
Let $\X$ be a hyperelliptic curve, with a set $B$ of branch points. Let $S$ be the index set as above and $U
$ be the set of all odd values of $S$. Then for all $T \subset S$ with even cardinality, we have $ \T[\e_T] =
0$  if and only if $\#(T \triangle U) \neq g+1$, where $\T[\e_T]$ is the theta constant corresponding to the
characteristics $\e_T$.
\end{thm}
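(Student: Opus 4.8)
The plan is to reduce the statement to a single, well-chosen identification: the group isomorphism
\[
\{T \subset B \mid \#T \equiv g+1 \bmod 2\}/\{\emptyset, B\} \;\cong\; \tfrac12\Z^{2g}/\Z^{2g},
\qquad T \mapsto \e_T,
\]
together with Riemann's vanishing theorem, which says that for the hyperelliptic Jacobian a theta constant $\T[\e]$ vanishes precisely when the corresponding point of $\J(\X)$ lies on the theta divisor $\Theta$ with multiplicity forcing $\T[\e](0)=0$; equivalently, writing the half-period as a sum of $g-1$ or fewer Weierstrass points (images of branch points) versus $g+1$ of them controls vanishing. The key classical fact I would invoke is that, under the map $T \mapsto \e_T$, a branch-point set $T$ of cardinality $\equiv g+1 \pmod 2$ gives an \emph{even} characteristic, and the associated theta constant is nonzero exactly when $\e_T$ can be represented by a subset of $B$ of cardinality exactly $g+1$. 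The combinatorial device that makes ``cardinality exactly $g+1$'' intrinsic is the set $U$ of odd indices, which has $\#U = g+1$ and plays the role of the base point: for any $T$ with $\#T$ even, $\e_T = \e_{T \triangle U} \cdot \e_U^{-1}$ in additive notation, but more usefully $\e_T$ as an unordered characteristic coincides with $\e_{T\triangle U}$ shifted by the fixed vector $\e_U$, and one checks $\e_U$ is exactly the ``Riemann constant'' type shift that converts the condition ``representable by $g+1$ branch points'' into ``$\#(T\triangle U) = g+1$ or its complement.''

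Concretely the steps are: (1) Recall from Mumford \cite[pg.~102]{Mu2} (or Fay) the statement of Riemann's singularity theorem for hyperelliptic curves: with the vector of Riemann constants expressed via the Weierstrass points, $\T[\e_T](0) = 0$ iff the effective divisor class attached to $T \triangle U$ has dimension $\geq 1$, which for a subset of branch points happens iff $\#(T\triangle U) \neq g+1$ (after reducing mod the relation $T \sim T^c$, so that $\#(T\triangle U)$ and $2g+2 - \#(T\triangle U)$ are identified, and the ``good'' value is $g+1$, self-complementary). (2) Verify that the given explicit formulas for $\e(2i-1)$, $\e(2i)$, and $\e(\infty)$ are compatible with this: compute $\e_U = \sum_{j \text{ odd}} \e(j)$ and confirm it is an even characteristic and that adding it implements the symmetric difference with $U$ at the level of characteristics. (3) Check the parity/evenness bookkeeping: $T$ of even cardinality $\Rightarrow \e_T$ is an even characteristic (using $e_*(\e_T) = (-1)^{4(\cdot)}$ and the explicit entries), so that the vanishing question is nontrivial only among even characteristics, matching the count $2^{g-1}(2^g+1) - \binom{2g+1}{g}$ of vanishing even thetanulls quoted just before the theorem. (4) Conclude: $\T[\e_T] = 0 \iff \#(T\triangle U) \neq g+1$.

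The main obstacle is step (1)–(2): translating Riemann's singularity theorem into the purely combinatorial statement about $\#(T \triangle U)$ requires pinning down the vector of Riemann constants for this particular symplectic basis and this particular labeling of branch points, and verifying that the explicit $\e$-map in the excerpt is the one for which the Riemann constant becomes $\e_U$ (up to the relation $\e_T = \e_{T^c}$). Once that normalization is fixed, the equivalence ``half-period lies on $\mathrm{Sing}\,\Theta$'' $\iff$ ``$T\triangle U$ is a special divisor'' $\iff$ ``$\#(T\triangle U)\neq g+1$'' is the standard hyperelliptic dictionary, and the remaining parity computations are routine. I would therefore structure the write-up as: cite Mumford for the analytic input, then spend the bulk of the argument on the explicit combinatorics identifying $U$ as the base set and checking the cardinality condition is the exact image of the special-divisor condition under $T \mapsto \e_T$.
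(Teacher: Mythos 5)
The paper offers no argument of its own for this theorem---it simply cites Mumford (Tata Lectures, pg.~102)---and your outline is precisely the standard proof given there: Riemann's vanishing/singularity theorem, the identification of the Riemann constant with $\e_U$ for this labeling of branch points, and the hyperelliptic dictionary between half-periods $\e_T$ and special divisors supported on Weierstrass points, so your approach matches the (cited) proof. One small correction to your step (3): an even-cardinality $T$ need not give an \emph{even} characteristic (odd characteristics also arise from such $T$, and for them both $\T[\e_T]=0$ and $\#(T\triangle U)\neq g+1$ hold automatically), but this bookkeeping slip does not affect the structure of your argument.
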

Notice also that by parity, all odd theta constants are zero. There is a formula (so called Frobenius' theta
formula) which half-integer theta characteristics for hyperelliptic curves satisfy.

\begin{lem} [Frobenius]
For all $z_i \in \C^g$, $1\leq i \leq 4$ such that $z_1 + z_2 + z_3 + z_4 = 0$ and for all $b_i \in \Q^{2g}$,
$1\leq i \leq 4$ such that $b_1 + b_2 + b_3 + b_4 = 0$, we have
\[ \sum_{j \in S \cup \{\infty\}} \epsilon_U(j) \prod_{i =1}^4 \T[b_i+\e(j)](z_i) = 0, \]
where for any $A \subset B$,
\[
\epsilon_A(k) =
          \begin{cases}
           1 & \textit {if $k \in A$} \\
           -1 & \textit {otherwise}
          \end{cases}
\]
\end{lem}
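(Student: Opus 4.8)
The plan is to derive Frobenius' identity by specializing the general Riemann quartic theta relation. Recall that for $w_1,\dots,w_4\in\C^g$ with $\sum_i w_i=0$ and half-integer characteristics $c_1,\dots,c_4$ with $\sum_i c_i\in\Z^{2g}$, Riemann's relation expands the single product $\prod_{i=1}^4\T[c_i](w_i)$ as a $2^{-g}$-weighted sum of four-fold products $\prod_{i=1}^4\T[c_i+\xi](w_i')$, the sum ranging over a family of half-characteristics $\xi$, the $w_i'$ being the Hadamard transforms of the $w_i$, and each summand carrying an explicit exponential parity factor (see, e.g., \cite[pg.~524]{Baker} or \cite{Mu2}). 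I would apply this to each product $\prod_{i=1}^4\T[b_i+\e(j)](z_i)$ occurring on the left-hand side of the lemma, so that the whole left-hand side becomes $2^{-g}\sum_\xi\big(\sum_{j\in S\cup\{\infty\}}\epsilon_U(j)\,\kappa(\xi,\e(j))\big)\,\prod_{i=1}^4\T[b_i'+\xi](z_i')$, where $\kappa$ collects those parity factors and the $b_i',z_i'$ no longer depend on $j$.

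It then suffices to show that the inner sum over $j$ vanishes for every $\xi$, and this is where the hyperelliptic structure enters essentially. Up to a factor depending only on $\xi$, the quantity $\kappa(\xi,\e(j))$ is the value at $\e(j)$ of a character of the group $\G$ of half-characteristics, so $\sum_{j}\epsilon_U(j)\,\kappa(\xi,\e(j))$ is a signed character sum over the $2g+2$ branch-point half-periods $\{\e(j):j\in S\cup\{\infty\}\}$. I would prove its vanishing from the combinatorics of the $\e$-map recalled above. For $\xi=0$ it is immediate: $\#U=g+1$, so $\sum_j\epsilon_U(j)=(g+1)-(g+1)=0$. For $\xi\ne 0$ one partitions $S\cup\{\infty\}$ according to the value of the character at $\e(j)$ and checks that each block contains equally many indices inside $U$ as outside $U$; this is a statement about the incidence pattern of the $\e(j)$ and rests on $\e_B\equiv 0$, on the syzygy relations among triples of these characteristics, and on the group isomorphism between subsets $T\subset B$ with $\#T\equiv g+1\pmod 2$ and $\tfrac12\Z^{2g}/\Z^{2g}$ recorded before Theorem~\ref{vanishingProperty}. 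I would first run the whole argument in the case $g=1$, where it reduces to the classical Jacobi quartic relation among $\T_1,\dots,\T_4$, and then treat general $g$ using the explicit formulas for $\e(2i-1)$ and $\e(2i)$.

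The main obstacle is precisely this sign bookkeeping: fixing a normalization of Riemann's relation, a labelling of the branch points, and a convention for the pairing on $\G$ so that the quadratic-form signs inside $\kappa$, the parities $e_*(\gamma)$ --- which intervene because one Hadamard-transformed argument has its sign reversed and $\T[\gamma](-z)=e_*(\gamma)\T[\gamma](z)$ --- and the weights $\epsilon_U(j)$ combine to give $0$ for every $\xi$, with no surviving ``phantom'' block. A more self-contained variant would bypass Riemann's relation: write each $\prod_i\T[b_i+\e(j)](z_i)$ as a four-fold lattice sum, use $\sum_i z_i=0$ to complete squares, and isolate the $j$-dependence; one arrives at the same character sum over the $\e(j)$, with the same combinatorial heart. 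Everything else --- absolute convergence, holomorphy, the quasi-periodicity laws --- is routine and already established above.
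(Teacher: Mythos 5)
Your reduction via Riemann's quartic relation is the right opening move (and is essentially how the proof cited by the paper begins -- the paper itself only refers to \cite[pg.~107]{Mu1} and gives no argument), but the heart of your plan is flawed: you claim that after interchanging the two sums, the inner signed character sum $\sum_{j\in S\cup\{\infty\}}\epsilon_U(j)\,\kappa(\xi,\e(j))$ vanishes for \emph{every} $\xi$, and that this follows purely from the combinatorics of the map $\e$ (the relations $\e_B\equiv 0$, syzygies, the isomorphism with $\tfrac12\Z^{2g}/\Z^{2g}$). All of those ingredients, like Riemann's relation itself, are valid for an arbitrary period matrix $\tau\in\H_g$ and make no use of the hypothesis that $\tau$ comes from a hyperelliptic curve whose branch points are matched to the $\e(j)$ by Abel--Jacobi. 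If your inner sum really vanished for all $\xi$, Frobenius' formula would be an identity on all of $\H_g$; it is not. For instance, taking $b_i=0$ and $(z_1,z_2,z_3,z_4)=(z,-z,w,-w)$ the formula gives $\sum_j \epsilon_U(j)\,\T[\e(j)](z)^2\,\T[\e(j)](w)^2=0$ for all $z,w$, i.e.\ a linear dependence among the $2g+2$ second-order theta functions $\T[\e(j)](\cdot)^2$; already for $g=3$ these $8$ functions are linearly independent for a generic $\tau$, so the formula singles out special period matrices. Hence the character sum cannot vanish for every $\xi$, and no amount of sign bookkeeping will make your ``each block balances between $U$ and its complement'' claim come out. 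Your $g=1$ test case is misleading precisely because every $\tau\in\H_1$ is hyperelliptic, so the formal and the true statements coincide there; the same remark applies to your ``self-contained'' lattice-sum variant, which is again $\tau$-independent.

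What is missing is the genuinely hyperelliptic analytic input. Note that with $\sum_i z_i=0$ the standard Hadamard transform of the arguments produces $z_1'=\tfrac12(z_1+z_2+z_3+z_4)=0$, so the transformed products contain a theta \emph{constant} $\T[\cdot](0)$ as a factor. The terms for which your character sum does not cancel are exactly those that must be killed by the vanishing of these constants, i.e.\ by Theorem~\ref{vanishingProperty} ($\T[\e_T](0)=0$ unless $\#(T\vartriangle U)=g+1$), which rests on Riemann's vanishing theorem and the Abel--Jacobi description of the branch-point half-periods -- this is the route of the proof in \cite{Mu1} that the lemma cites, and it is the step your proposal never invokes. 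To repair the argument you should keep your interchange of sums, but then split the $\xi$'s into those where the signed sum vanishes combinatorially and those where it does not, and show that in the latter case the accompanying theta constant vanishes by Theorem~\ref{vanishingProperty}; without that second half the proof cannot close.
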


\proof See \cite[pg.  107]{Mu1}.

\qed

A relationship between theta constants and the branch points of the hyperelliptic curve is given by Thomae's
formula.
\begin{lem}[Thomae]\label{Thomae}
For a non singular even half integer characteristics $e$ corresponding to the partition of the branch points
$\{1,2,\cdots,2(g+1)\} = \{i_1<i_2<\cdots<i_{g+1} \} \cup \{j_1<j_2<\cdots<j_{g+1} \},$ we have
\[\T[e](0;\t)^8 = A \, \prod_{k<l} (\l_{i_k} - \l_{i_l})^2 (\l_{j_k} - \l_{j_l})^2. \]
\end{lem}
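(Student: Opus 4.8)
The plan is to compare the two sides as holomorphic functions on the configuration space of the $2(g+1)$ branch points, showing that they are sections of the same line bundle with the same divisor, hence proportional. First I would fix an ordering $\lambda_1,\dots,\lambda_{2(g+1)}$ of the branch points (bringing $\infty$ to a finite general position if necessary) and the associated symplectic homology basis, normalized as in the construction of the map $\e$ above, so that the even non-singular characteristic $e$ in the statement is $\e_T$ for the partition $T=\{i_1,\dots,i_{g+1}\}$, $T^c=\{j_1,\dots,j_{g+1}\}$. Through the period map, $\t$ and hence $\T[e](0;\t)$ becomes a multivalued holomorphic function of $(\lambda_1,\dots,\lambda_{2(g+1)})$ on the space of distinct tuples; raising to the eighth power removes the square-root ambiguities in the periods, so $\T[e](0;\t)^8$ is genuinely single-valued there, exactly as the right-hand product is.

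The core of the argument is to match the divisors of the two sides on this configuration space. On the left, Riemann's vanishing theorem says $\T[e](0;\t)=0$ precisely when $e$ is a singular even characteristic; by Theorem~\ref{vanishingProperty} the hyperelliptic characteristic $\e_T$ becomes singular exactly in the degenerations in which two branch points in the same block $T$ or $T^c$ collide, and a local analysis of the period matrix under such a node (the classical Schottky--Jung style degeneration, equivalently Thomae's own computation with the explicit differentials $x^{k-1}\,dx/y$ and Riemann's bilinear relations) shows that $\T[e](0;\t)^8$ vanishes to first order in each coincidence $\lambda_{i_k}-\lambda_{i_l}$ and $\lambda_{j_k}-\lambda_{j_l}$, while it stays nonzero, with $\t$ remaining in $\H_g$, when branch points from opposite blocks collide. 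Thus the divisor of zeros of the left side is exactly $\sum_{k<l}\{\lambda_{i_k}=\lambda_{i_l}\}+\sum_{k<l}\{\lambda_{j_k}=\lambda_{j_l}\}$, the same as that of the right side, so the ratio $\T[e](0;\t)^8\big/\prod_{k<l}(\lambda_{i_k}-\lambda_{i_l})^2(\lambda_{j_k}-\lambda_{j_l})^2$ is holomorphic and nowhere zero on the configuration space; since it has the correct homogeneity weight (it is $SL_2(\C)$-invariant after normalizing three branch points to $0,1,\infty$), it must be a constant, which I will call $A$.

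It remains to see that the same $A$ occurs for every non-singular even $e$ and to identify it. For the first point I would use that permuting the branch points realizes $S_{2(g+1)}$ acting compatibly on both sides --- on the left through the induced $Sp_{2g}(\Z)$-action on theta constants, whose automorphy factor is an eighth root of unity and hence invisible after the eighth power, and which permutes the $\T[\e_T]^8$ exactly as the right-hand products are permuted --- so $A$ cannot depend on the partition. Its value then follows by reduction to genus $1$, where the claim is the classical dictionary $\theta_2^8,\theta_3^8,\theta_4^8 \propto (e_a-e_b)^2$ together with Jacobi's identity $\theta_3^4=\theta_2^4+\theta_4^4$, propagated to higher genus through a further degeneration of the curve into elliptic pieces.

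The step I expect to be the main obstacle is the local degeneration analysis in the middle paragraph: one has to prove that as two branch points in a common block merge the period matrix degenerates in just such a way that $\T[\e_T]$ acquires a zero of exactly first order, neither more nor less, and that no spurious zeros or poles appear when branch points in opposite blocks come together or go to infinity. This is the one place where genuine analytic input about the hyperelliptic period map is needed, beyond the purely combinatorial theta-identity bookkeeping of Section~2; everything else is essentially formal once this local picture is in hand.
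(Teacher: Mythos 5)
The paper does not actually prove this lemma: it is quoted from Mumford (Tata Lectures on Theta, the reference \cite{Mu1}, with the constant $A$ described there), so there is no internal argument to compare yours against; what matters is whether your sketch would stand on its own. As it is written, it would not. The step you yourself flag as ``the main obstacle'' --- that when two branch points in the same block collide the period matrix degenerates so that $\T[\e_T]^8$ vanishes to order exactly one, while collisions of points in opposite blocks and the passage to infinity produce no zeros or poles --- is not a technical verification to be deferred: it is essentially the whole content of Thomae's theorem, and without it your divisor comparison is an empty shell. In the same vein, the single-valuedness claim needs an argument: monodromy of the ordered configuration space acts on theta constants through $Sp_{2g}(\Z)$, and you must check that the pure braid group lands in a level subgroup fixing the half-integer characteristic $\e_T$ (it does, because the branch points, hence the $2$-torsion points $\e_T$, return to themselves, but this is exactly the kind of point that has to be said, not assumed), before the eighth power disposes of the root-of-unity automorphy factor.

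There is also a genuine logical gap at the end of your second paragraph: ``holomorphic and nowhere zero on the configuration space, with the right homogeneity'' does not imply constant. The complement of the diagonals in affine space is not compact, and a nowhere-vanishing holomorphic function on it can be the exponential of any holomorphic function; Liouville-type conclusions require control of the ratio along the boundary divisors $\l_i=\l_j$ and at infinity (for instance, showing the ratio extends holomorphically and without zeros across each collision divisor and is bounded at infinity, or replacing the analytic argument by the algebraicity of $\T[e]^8$ as a function of the branch points). This boundary analysis is again exactly the degeneration computation you postponed, so the two gaps are really one, and it is the one place where the classical proofs (Thomae's own, or Mumford's in \cite{Mu1}) do real work --- e.g.\ by computing logarithmic derivatives of $\T[e]^8$ with respect to the $\l_i$ via the heat equation and Rauch's variational formula, which simultaneously pins down the divisor and the constancy of the ratio. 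Your final paragraph (permuting branch points to see that $A$ is independent of the partition, then degenerating to elliptic pieces to evaluate it) is a sound idea and matches the classical normalization, but it only becomes meaningful once the middle step is actually proved.
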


See   \cite[pg.  128]{Mu1} for the description of $A$ and \cite[pg.  120]{Mu1} for the proof. Using Thomae's
formula and Frobenius' theta identities we express the branch points of the hyperelliptic curves in terms of
even theta constants.

\section{Genus 2 curves}
The  automorphism group $G$ of a genus 2 curve $\X$ in characteristic $\ne2$ is isomorphic to \ $\Z_2$,
$\Z_{10}$, $V_4$, $D_8$, $D_{12}$, $SL_2 (3)$, $ GL_2(3)$, or $2^+S_5$. The case when $G \iso 2^+S_5$ occurs
only in characteristic 5. If $G \iso SL_2 (3)$ (resp., $ GL_2(3)$) then $\X$ has equation $Y^2=X^6-1$ (resp.,
$Y^2=X(X^4-1)$). If $G \iso \Z_{10}$ then $\X$ has equation $Y^2=X^6-X$.
For a fixed $G$ from the list above, the locus of genus 2 curves with automorphism group $G$ is an irreducible
algebraic subvariety of $\M_2$. Such loci can be described in terms of the Igusa invariants.

For any genus 2 curve we have six odd theta characteristics and ten even theta characteristics. The following
are the sixteen theta characteristics, where the first ten are even and the last six are odd. For simplicity,
we denote them by $\T_i = \ch{a} {b}$ instead of $\T_i \ch{a} {b} (z , \t)$ where $i=1,\dots ,10$ for the
even theta functions.
\begin{small}
\[
\begin{split}
\T_1 = \chr {0}{0}{0}{0} , \,  \T_2 = \chr {0}{0}{\frac{1}{2}} {\frac{1}{2}} ,  \T_3 =\chr
{0}{0}{\frac{1}{2}}{0} , \, \, \T_4 = \chr {0}{0}{0}{\frac{1}{2}} , \, \,    \T_5 = \chr{\frac{1}{2}}{0}
{0}{0} ,\\
  \T_6  = \chr {\frac{1}{2}}{0}{0}{\frac{1}{2}} , \, \,
  \T_7 = \chr{0}{\frac{1}{2}} {0}{0} , \, \,
  \T_8 = \chr{\frac{1}{2}}{\frac{1}{2}} {0}{0} , \, \,
  \T_9 = \chr{0}{\frac{1}{2}} {\frac{1}{2}}{0} , \, \,
  \T_{10} = \chr{\frac{1}{2}}{\frac{1}{2}} {\frac{1}{2}}{\frac{1}{2}} ,\\
\end{split}
\]
\end{small}
and the odd theta functions  correspond to the following characteristics
\[   \chr{0}{\frac{1}{2}} {0}{\frac{1}{2}} , \,
   \chr{0}{\frac{1}{2}} {\frac{1}{2}}{\frac{1}{2}} , \,
    \chr{\frac{1}{2}}{0} {\frac{1}{2}}{0} , \, \,
    \chr{\frac{1}{2}}{\frac{1}{2}} {\frac{1}{2}}{0} , \,
    \chr{\frac{1}{2}}{0} {\frac{1}{2}}{\frac{1}{2}} , \,
    \chr{\frac{1}{2}}{\frac{1}{2}} {0}{\frac{1}{2}}  \]
Consider the following G\"opel group
$$G = \left\{0 = \chr{0}{0}{0}{0}, \m_1 = \chr {0}{0}{0}{\frac{1}{2}}, \m_2 = \chr {0}{0}{\frac{1}{2}}{0},
\m_1 \m_2 = \chr {0}{0}{\frac{1}{2}} {\frac{1}{2}} \right\}.$$ Then, the corresponding G\"opel systems are
given by:
\[
\begin{split}
G &= \left\{ \chr{0}{0}{0}{0},  \chr {0}{0}{0}{\frac{1}{2}},  \chr {0}{0}{\frac{1}{2}}{0},
 \chr {0}{0}{\frac{1}{2}} {\frac{1}{2}} \right\} \\
\bn_1 G &= \left\{\chr{\frac{1}{2}}{0} {0}{0}, \chr {\frac{1}{2}}{0}{0}{\frac{1}{2}},
 \chr{\frac{1}{2}}{0} {\frac{1}{2}}{0}, \chr{\frac{1}{2}}{0} {\frac{1}{2}}{\frac{1}{2}} \right\} \\
\bn_2 G &= \left\{\chr{0}{\frac{1}{2}} {\frac{1}{2}}{0}, \chr{0}{\frac{1}{2}} {\frac{1}{2}}{\frac{1}{2}},
\chr{0}{\frac{1}{2}} {0}{0},
\chr{0}{\frac{1}{2}} {0}{\frac{1}{2}} \right\} \\
\bn_3 G &= \left\{\chr{\frac{1}{2}}{\frac{1}{2}} {\frac{1}{2}}{0}, \chr{\frac{1}{2}}{\frac{1}{2}}
{\frac{1}{2}}{\frac{1}{2}}, \chr{\frac{1}{2}}{\frac{1}{2}} {0}{0}, \chr{\frac{1}{2}}{\frac{1}{2}}
{0}{\frac{1}{2}} \right\}
\end{split}
\]
Notice that from all four cosets, only $G$ has all even characteristics as noticed in Corollary~\ref{numb_systems}.
Using the Prop.~\ref{ThetaId} we have the following six identities for the above G\"opel group.
\[
\left \{ \begin{array}{lll}
 \T_5^2 \T_6^2 & = & \T_1^2 \T_4^2 - \T_2^2 \T_3^2 \\
 \T_5^4 + \T_6^4 & =& \T_1^4 - \T_2^4 - \T_3^4 + \T_4^4 \\
 \T_7^2 \T_9^2 & = & \T_1^2 \T_3^2 - \T_2^2 \T_4^2 \\
 \T_7^4 + \T_9^4 &= & \T_1^4 - \T_2^4 + \T_3^4 - \T_4^4 \\
 \T_8^2 \T_{10}^2 & = & \T_1^2 \T_2^2 - \T_3^2 \T_4^2 \\
 \T_8^4 + \T_{10}^4 & = & \T_1^4 + \T_2^4 - \T_3^4 - \T_4^4 \\
\end{array}
\right.
\]
These identities express even theta constants in terms of four theta constants. We call them fundamental
theta constants $\T_1, \, \T_2, \, \T_3, \, \T_4$.

Next we  find  the relation between theta characteristics and branch points of a genus two curve.
\begin{lem}[Picard] Let a genus 2 curve be given by
\begin{equation} \label{Rosen2}
Y^2=X(X-1)(X-\lambda)(X-\mu)(X-\nu).
\end{equation} Then, $\lambda, \mu, \nu$   can be written as follows:
%
\begin{equation}\label{Picard}
\l = \frac{\T_1^2\T_3^2}{\T_2^2\T_4^2}, \quad \mu = \frac{\T_3^2\T_8^2}{\T_4^2\T_{10}^2}, \quad \nu =
\frac{\T_1^2\T_8^2}{\T_2^2\T_{10}^2}.
\end{equation}
\end{lem}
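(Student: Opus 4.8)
The plan is to derive the three formulas \eqref{Picard} by combining Thomae's formula (Lemma~\ref{Thomae}) with the vanishing criterion of Theorem~\ref{vanishingProperty}, exactly as one does in the classical Rosenhain normal form. First I would fix the branch locus of \eqref{Rosen2} as the six points $\{0,1,\l,\mu,\nu,\infty\}$ and label them by the index set $S=\{1,\dots,6\}$ (or $\{1,\dots,5\}\cup\{\infty\}$), identifying $\l_1=0$, $\l_2=1$, $\l_3=\l$, $\l_4=\mu$, $\l_5=\nu$. Since $g=2$, the odd subset $U$ consists of the odd indices, and Theorem~\ref{vanishingProperty} tells us precisely which of the ten even theta constants $\T_1,\dots,\T_{10}$ correspond to which $\binom{6}{3}$-type partition $T\triangle U$; the key bookkeeping step is to match each $\T_i$ (in the explicit characteristic list given just above the statement) with a two-element subset $T$ of $\{1,\dots,6\}$ via $\e_T$, so that $\T_i$ is a nonvanishing even constant attached to the partition $\{T\triangle U, (T\triangle U)^c\}$.

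Next I would write down Thomae's formula $\T[e]^8 = A\prod_{k<l}(\l_{i_k}-\l_{i_l})^2(\l_{j_k}-\l_{j_l})^2$ for each of the even constants appearing in \eqref{Picard}, namely $\T_1,\T_2,\T_3,\T_4,\T_8,\T_{10}$. The constant $A$ is the same for all of them (it depends only on the period matrix normalization, cf. \cite[pg. 128]{Mu1}), so in any ratio of the form $\T_i^8\T_j^8/(\T_k^8\T_l^8)$ the factor $A$ cancels. Then I would form the fourth power of each proposed identity in \eqref{Picard}, e.g. $\l^4 = \T_1^8\T_3^8/(\T_2^8\T_4^8)$, substitute the Thomae expressions, and check that the resulting product of squared differences of the $\l_{i}$'s telescopes down to $\l^4 = 0^4\cdot(\cdots)$ — more precisely, that after cancellation the right-hand side equals the fourth power of $\frac{\T_1^2\T_3^2}{\T_2^2\T_4^2}$, which should reduce to a monomial in $0,1,\l,\mu,\nu$ equal to $\l^4$ (and similarly $\mu^4$, $\nu^4$ for the other two). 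Finally one fixes the sign/branch of the square and fourth roots: since $\l,\mu,\nu$ and all the theta constants are determined continuously on the relevant locus and the formulas hold at a reference point (e.g. a symmetric curve), the fourth-root ambiguity is pinned down, giving the stated equalities rather than merely equalities up to roots of unity.

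The main obstacle I expect is the combinatorial matching in the first step: correctly identifying which two-element subset $T\subset\{1,\dots,6\}$ corresponds to each listed characteristic $\T_1,\dots,\T_{10}$ under the map $\e$ and the isomorphism $\{T : \#T\equiv g+1 \bmod 2\}/\sim\;\cong\;\frac12\Z^{2g}/\Z^{2g}$, because an error there scrambles which differences $(\l_{i_k}-\l_{i_l})$ enter each Thomae product and the cancellation will fail to produce the clean monomials $\l^4,\mu^4,\nu^4$. Once the dictionary between the six $\T_i$ used in \eqref{Picard} and their branch-point partitions is correct, the remaining verification is a short symbolic computation: each of the three identities in \eqref{Picard}, raised to the eighth power, becomes an identity between two explicit rational functions in $\l,\mu,\nu$ with the $A$'s cancelled, and these are checked directly. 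This is, in essence, the route by which Picard (and Rosenhain before him) obtained \eqref{Picard}; I would cite Thomae's formula as the engine and present the partition table explicitly so the cancellations are transparent.
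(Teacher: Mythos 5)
Your proposal follows essentially the same route as the paper: order the six branch points, use the dictionary between even characteristics and partitions so that Thomae's formula (Lemma~\ref{Thomae}) expresses each relevant $\T_i$ as $A$ times a product of branch-point differences, form ratios so that $A$ cancels, and then resolve the residual root-of-unity ambiguity. The only cosmetic difference is in that last step: the paper simply notes that each choice of root for $(\l,\mu,\nu)$ yields an isomorphic genus~2 curve and therefore one may fix the stated values, whereas you invoke a continuity/reference-point argument — the paper's phrasing is the cleaner justification, since the sign genuinely depends on the ordering of branch points and the symplectic basis rather than being forced.
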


\begin{proof}
There are several ways for relating $\lambda, \mu, \nu$ to theta constants, depending on the ordering of the
branch points of the curve. Let $B = \{\nu, \mu,\lambda, 1,0\}$ be the branch points of the curves in this
order and $U = \{\nu, \lambda, 0\}$ be the set of odd branch points. Using Lemma~\ref{Thomae} we have the
following set of equations of theta constants and branch points.
\begin{equation}\label{Thomaeg=2}
\begin{array}{ll}
\T_1^4 = A \, \nu \l (\mu -1) (\nu - \l) &
\T_2^4  = A \, \mu (\mu -1) ( \nu - \l) \\
\T_3^4  = A \, \mu  \l (\mu - \l) (\nu - \l) &
\T_4^4  = A\, \nu (\nu - \l) (\mu - \l) \\
\T_5^4  = A \, \l \mu (\nu - 1) ( \nu - \mu)&
\T_6^4  = A \, (\nu - \mu) (\nu -\l) ( \mu -\l)\\
\T_7^4  =  A \, \mu (\nu -1) ( \l -1) (\nu - \l) &
\T_8^4  = A \, \mu \nu (\nu - \mu) (\l -1) \\
\T_9^4  = A \, \nu ( \mu -1) (\l - 1) (\mu - \l) &
\T_{10}^4  = A \, \l ( \l - 1) (\nu - \mu), \\
\end{array}
\end{equation}
where $A$ is a constant.
Choosing the appropriate equation from the set Eq.~\eqref{Thomaeg=2} we have the following:
\[ \l^2 =\left(\frac{\T_1^2\T_3^2}{\T_2^2\T_4^2}\right)^2 \quad \mu^2 = \left(\frac{\T_3^2\T_8^2}{\T_4^2\T_{10}^2}\right)^2 \quad \nu^2
=\left(\frac{\T_1^2\T_8^2}{\T_2^2\T_{10}^2}\right)^2.
\]
Each value for $(\l, \mu, \nu )$ gives isomorphic genus 2 curves. Hence, we can choose
\[ \l = \frac{\T_1^2\T_3^2}{\T_2^2\T_4^2}, \quad \mu = \frac{\T_3^2\T_8^2}{\T_4^2\T_{10}^2}, \quad \nu =
\frac{\T_1^2\T_8^2}{\T_2^2\T_{10}^2}.\]
This completes the proof.

\end{proof}
%
One of the main goals of this paper is to describe each locus of genus 2 curves with fixed automorphism group
in terms of the fundamental theta constants.
We have the following
\begin{cor}\label{possibleCurve}
Every genus two curve can be written in the form:
\[
y^2 = x \, (x-1) \, \left(x - \frac {\T_1^2 \T_3^2} {\T_2^2  \T_4^2}\right)\, \left(x^2 \, -   \frac{\T_2^2 \, \T_3^2 +
\T_1^2 \, \T_4^2} { \T_2^2 \, \T_4^2} \cdot    \a  \, x + \frac {\T_1^2 \T_3^2} {\T_2^2 \T_4^2} \, \a^2 \right),
\]
where $\a = \frac {\T_8^2} {\T_{10}^2}$ and in terms of $\, \, \T_1, \dots , \T_4$ is given by
\[
  \a^2 + \frac {\T_1^4 + \T_2^4 - \T_3^4 - \T_4^4}{\T_1^2 \T_2^2 - \T_3^2 \T_4^2 } \, \a + 1 =0
\]
Furthermore,  if $\alpha = {\pm} 1$ then $V_4 \embd Aut(\X)$.
\end{cor}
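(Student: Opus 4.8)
The plan is to start from the Rosenhain model $Y^2 = X(X-1)(X-\l)(X-\mu)(X-\nu)$ --- which every genus $2$ curve admits and for which the Picard lemma already expresses $\l,\mu,\nu$ through theta constants --- and to rewrite the branch points using the six G\"opel identities established above. Set $\a = \T_8^2/\T_{10}^2$. Then formulas~\eqref{Picard} give immediately $\mu = \frac{\T_3^2}{\T_4^2}\,\a$ and $\nu = \frac{\T_1^2}{\T_2^2}\,\a$, while $\l = \frac{\T_1^2\T_3^2}{\T_2^2\T_4^2}$ is unchanged. Consequently $\mu+\nu = \frac{\T_2^2\T_3^2 + \T_1^2\T_4^2}{\T_2^2\T_4^2}\,\a$ and $\mu\nu = \frac{\T_1^2\T_3^2}{\T_2^2\T_4^2}\,\a^2 = \l\,\a^2$, so that $(x-\mu)(x-\nu) = x^2 - (\mu+\nu)x + \mu\nu$ is exactly the quadratic factor appearing in the statement, and substituting into the Rosenhain model produces the asserted equation.

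To obtain the relation for $\a$, I would eliminate $\T_{10}$ (equivalently $\T_8$) between the two G\"opel identities attached to the coset $\bn_3 G$, namely $\T_8^2\T_{10}^2 = \T_1^2\T_2^2 - \T_3^2\T_4^2$ and $\T_8^4 + \T_{10}^4 = \T_1^4 + \T_2^4 - \T_3^4 - \T_4^4$. Dividing the first by $\T_{10}^4$ gives $\a = (\T_1^2\T_2^2 - \T_3^2\T_4^2)/\T_{10}^4$, and dividing the second by $\T_{10}^4$ gives $\a^2 + 1 = (\T_1^4 + \T_2^4 - \T_3^4 - \T_4^4)/\T_{10}^4$. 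The quotient of these two relations clears $\T_{10}^4$ and leaves precisely the monic quadratic relation in $\T_1,\dots,\T_4$ displayed in the statement.

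For the final assertion, observe that $\mu\nu = \l\,\a^2$ shows $\a = \pm1$ if and only if $\l = \mu\nu$ (here $\l\neq 0$ because the six branch points are distinct). When $\l = \mu\nu$, the M\"obius transformation $x\mapsto \l/x$ carries the branch set $\{0,1,\l,\mu,\nu,\infty\}$ to itself: it interchanges $0\leftrightarrow\infty$, $1\leftrightarrow\l$, and $\mu\leftrightarrow\l/\mu=\nu$. Since for a hyperelliptic curve the hyperelliptic involution $\iota$ is central and $\Aut(\X)/\langle\iota\rangle$ is exactly the stabilizer in $PGL_2(\C)$ of the branch divisor, this involution lifts to an automorphism of $\X$ of order $2$ distinct from $\iota$; together with $\iota$ it generates a subgroup isomorphic to $V_4$, so $V_4\embd\Aut(\X)$.

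The only step beyond routine bookkeeping is this last one, where one must invoke the standard structure theory of hyperelliptic automorphism groups to know that a branch-divisor-preserving element of $PGL_2(\C)$ genuinely lifts to $\X$ and differs from $\iota$, so that the group it generates with $\iota$ is the Klein four-group and not $C_2$. Everything in the first two paragraphs reduces to elementary manipulation of the Picard lemma together with the six G\"opel identities already in hand.
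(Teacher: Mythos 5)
Your argument is correct and follows the paper's own proof essentially step for step: both start from the Rosenhain model with $\l,\mu,\nu$ given by Picard's lemma, substitute $\mu=\frac{\T_3^2}{\T_4^2}\,\a$, $\nu=\frac{\T_1^2}{\T_2^2}\,\a$ with $\a=\T_8^2/\T_{10}^2$, and obtain the quadratic relation for $\a$ by eliminating $\T_8,\T_{10}$ from the same two identities $\T_8^2\T_{10}^2=\T_1^2\T_2^2-\T_3^2\T_4^2$ and $\T_8^4+\T_{10}^4=\T_1^4+\T_2^4-\T_3^4-\T_4^4$. The only difference is that where the paper invokes the final implication as ``well known,'' you make it explicit that $\a=\pm1$ gives $\mu\nu=\l$ and that the lift of $x\mapsto\l/x$ supplies the elliptic involution yielding $V_4\embd\Aut(\X)$, which is added detail rather than a different route.
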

\proof  Let's write the genus 2 curve in the following form: $$Y^2 = X (X-1) (X-\l) (X-\mu) (X-\nu)$$ where
$\l ,\mu ,\nu$ are given by Eq. \eqref{Picard}. Let $\a := \frac {\T_8^2} {\T_{10}^2}$.  Then,
\[
\begin{array}{ll}
\mu =  \frac{\T_3^2}{\T_4^2}\, \a,  &  \nu =  \frac{\T_1^2}{\T_2^2} \, \a
\end{array}
\]
Using the following two identities,
\begin{equation}\label{Frobenius}
\begin{split}
\T_8^4 + \T_{10}^4 &= \T_1^4+\T_2^4-\T_3^4-\T_4^4\\
\T_8^2 \T_{10}^2 &= \T_1^2 \T_2^2 - \T_3^2 \T_4^2
\end{split}
\end{equation}
%
we have,
\begin{equation}\label{rootof}
 \a^2 +    \frac {\T_1^4 + \T_2^4 - \T_3^4 - \T_4^4}{\T_1^2 \T_2^2 - \T_3^2 \T_4^2 } \,           \a + 1 = 0
\end{equation}
If $\a=\pm 1$ the $\mu \nu = \l$.    It is well known that this implies that the genus 2 curve has an
elliptic involution. Hence,  $V_4 \embd Aut(\X)$.
\endproof
\begin{rem} i) From the above we have that $\T_8^4=\T_{10}^4$ implies that $V_4 \embd Aut(\X)$. Lemma~15
determines a necessary and equivalent statement when $V_4 \embd Aut(\X)$.

ii)  The last part of the lemma above shows that if $\T_8^4=\T_{10}^4$ then all coefficients of the genus 2
curve are given as rational functions of the 4 fundamental theta functions. Such fundamental theta functions
determine the field of moduli of the given curve. Hence, the curve is defined over its field of moduli.
\end{rem}

\begin{cor}
Let $\X$ be a genus 2 curve which has an elliptic involution. Then $\X$ is defined over its field of moduli.
\end{cor}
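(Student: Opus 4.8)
The plan is to derive this directly from Corollary~\ref{possibleCurve}. For a genus~$2$ curve, having an elliptic involution is equivalent to $V_4\embd\Aut(\X)$ (an elliptic involution together with the hyperelliptic one already generates such a $V_4$), and, by the calculation inside the proof of Corollary~\ref{possibleCurve}, after a suitable choice of symplectic basis and ordering of the branch points this amounts to $\a=\T_8^2/\T_{10}^2=\pm1$, i.e.\ $\T_8^4=\T_{10}^4$. So, under the hypothesis, I substitute $\a=\pm1$ into the normal form given there: the linear factor becomes $x-\frac{\T_1^2\T_3^2}{\T_2^2\T_4^2}$ and the quadratic factor becomes $x^2\mp\frac{\T_2^2\T_3^2+\T_1^2\T_4^2}{\T_2^2\T_4^2}\,x+\frac{\T_1^2\T_3^2}{\T_2^2\T_4^2}$; writing $p=\T_3^2/\T_4^2$ and $q=\T_1^2/\T_2^2$, the whole model collapses to $y^2=x(x-1)(x-pq)(x\mp p)(x\mp q)$. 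Thus $\X$ admits a hyperelliptic model defined over the field $K:=\Q(p,q)$, generated by just two of the theta ratios.

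A model over $K$ makes $K$ a field of definition, so $K$ contains the field of moduli $M$ of $\X$; equivalently, the Igusa invariants of $\X$ are rational functions of $p$ and $q$. The substantive point is that in general $M\subsetneq K$ --- $p$ and $q$ are Rosenhain-type coordinates, sensitive to an ordering of the Weierstrass points, so $K$ is a nontrivial finite extension of $M$ --- and so one cannot simply declare the above model to be defined over $M$; it must be \emph{descended}. I would do this by passing to the $V_4$-reduced form $y^2=x^6+ax^4+bx^2+1$ obtained from $y^2=x(x-1)(x-pq)(x\mp p)(x\mp q)$ by a M\"obius change of $x$ carrying the two fixed points of the elliptic involution to $0$ and $\infty$ (so that it acts as $x\mapsto -x$ and the branch polynomial becomes even in $x$); the coefficients $a,b$ of that form are then determined, up to the finite group of normalizing substitutions, by quantities in $M$, and one invokes Weil's descent criterion for the Galois cocycle built from the isomorphisms between $\X$ and its conjugates, using the automorphisms in $V_4$ to split that cocycle and obtain a model over $M$.

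The main obstacle is precisely this descent step. Writing down a model whose coefficients are polynomial in theta constants only bounds the field of definition from above; upgrading that to the assertion that a model over the field of moduli exists is the delicate part, and it genuinely \emph{fails} for the generic genus~$2$ curve (automorphism group $C_2$). Here it succeeds only because $\X$ carries the extra involutions of $V_4$, which is exactly what trivialises the relevant obstruction class. Once the descent is in hand, the Corollary is immediate from the reduction to $\a=\pm1$ furnished by Corollary~\ref{possibleCurve}, and the same argument substantiates Remark~(ii) above.
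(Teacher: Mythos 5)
Your starting reduction is the same as the paper's: the paper proves this Corollary via the preceding Remark (ii) --- once $\a=\T_8^2/\T_{10}^2=\pm1$ (Corollary~\ref{possibleCurve}), all coefficients of the curve become rational functions of the four fundamental theta constants, which the paper asserts ``determine the field of moduli'' --- and then notes that the statement is the main result of \cite{Ca}. You correctly identify the weak link in that short route: the field $K=\Q(p,q)$ generated by the theta ratios $p=\T_3^2/\T_4^2$, $q=\T_1^2/\T_2^2$ is merely \emph{a} field of definition containing the field of moduli $M$, and in general it is strictly larger (these are level-two/Rosenhain data, sensitive to an ordering of the Weierstrass points), so exhibiting the model $y^2=x(x-1)(x-pq)(x\mp p)(x\mp q)$ does not by itself prove the Corollary.

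The problem is that your proposal does not close the gap it diagnoses. The sentence ``one invokes Weil's descent criterion \dots using the automorphisms in $V_4$ to split that cocycle'' is exactly the step that requires proof: because $\Aut(\X)$ is nontrivial, the isomorphisms $f_\sigma\colon \X^\sigma\to\X$ are determined only up to composition with automorphisms, the resulting obstruction is a class in a (twisted, nonabelian) degree-two cohomology set, and proving that this obstruction vanishes for every genus $2$ curve with $V_4\embd\Aut(\X)$ is precisely the main theorem of Cardona--Quer \cite{Ca} --- i.e.\ the very statement you are asked to prove. As written, your argument reduces the Corollary to itself (or to the citation), and the explicit theta-constant model plays no role in the descent. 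To make it a proof you would have to carry out the splitting, for instance as in \cite{Ca} by producing a model whose coefficients are rational functions of the invariants on the $V_4$ locus, or else state plainly that the descent step is being quoted from \cite{Ca}. For comparison, the paper itself effectively takes the latter course: its Remark (ii) is a brief heuristic (resting on the unproved claim that the fundamental theta constants generate the field of moduli), and the Corollary is attributed to \cite{Ca}.
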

\noindent This was the main result of  \cite{Ca}.

\subsection{Describing the locus of genus two curves with fixed automorphism group by theta constants}
The locus $\L_2$ of genus 2 curves $\X$ which have an elliptic involution is a closed subvariety of $\mathcal M_2$. Let
$W= \{\a_1, \a_2, \b_1, \b_2, \gamma_1, \gamma_2 \}$ be the set of roots of the binary sextic and $A$ and $B$ be
subsets of $W$ such that $W=A \cup B$ and $| A \cap B | =2$. We define the cross ratio of the two pairs $z_1,z_2 ;
z_3,z_4 $ by
$$(z_1,z_2 ; z_3,z_4) = \frac{z_1 ; z_3,z_4}{z_2 ; z_3,z_4} = \frac{z_1-z_3}{z_1-z_4} : \frac{z_2-z_3}{z_2-z_4}.$$
Take $ A = \{ \a_1, \a_2, \b_1 , \b_2\}$ and $B = \{ \gamma_1, \gamma_2, \b_1, \b_2\}$.
Jacobi \cite{Krazer} gives a   description of $\L_2$ in terms of the cross ratios of the elements of $W.$
$$ \frac {\a_1-\b_1} {\a_1-\b_2} : \frac {\a_2-\b_1} {\a_2-\b_2}= \frac {\gamma_1-\b_1} {\gamma_1-\b_2} : \frac
{\gamma_2-\b_1} {\gamma_2-\b_2}
$$
We recall that the following identities hold for cross ratios:
\[
(\a_1,\a_2\,;\b_1,\b_2)=(\a_2,\a_1;\b_2,\b_1)=(\b_1,\b_2;\a_1,\a_2)=(\b_2,\b_1;\a_2,\a_1)
\]
and
\[
(\a_1,\a_2;\infty,\b_2)=(\infty,\b_2;\a_1,\a_2)=(\b_2;\a_2,\a_1)
\]
Next we want to use this result to determine relations among theta functions for a genus 2 curve in the locus
$\L_2$. Let $\X$ be any genus 2 curve given by equation
$$Y^2=X(X-1)(X-a_1)(X-a_2)(X-a_3)$$
We take $\infty \in A \cap B$. Then there are five cases for $\a \in A \cap B $, where $\a $ is an element of
the set$
 \{0,1, a_1, a_2, a_3\}$. For each of these cases there are three possible relationships for cross ratios as described below:\\
i) $A \cap B = \{ 0, \infty\}$: The possible cross ratios are
$$(a_1,1;\infty,0) = (a_3,a_2;\infty,0)$$  $$(a_2,1;\infty,0) = (a_1,a_3;\infty,0)$$ $$(a_1,1;\infty,0) = (a_2,a_3;\infty,0)$$
ii) $A \cap B = \{ 1, \infty\}$: The possible cross ratios are
$$(a_1,0;\infty,1)=(a_2,a_3;\infty,1)$$  $$(a_1,0;\infty,1)=(a_3,a_2;\infty,1)$$  $$(a_2,0;\infty,1)=(a_1,a_3;\infty,1)$$
iii) $A \cap B = \{ a_1, \infty\}$: The possible cross ratios are
 $$(1,0;\infty,a_1)=(a_3,a_2;\infty,a_1)$$  $$(a_2,0;\infty,a_1)=(1,a_3;\infty,a_1)$$  $$(1,0;\infty,a_1)=(a_2,a_3;\infty,a_1)$$
iv) $A \cap B= \{ a_2, \infty\}$: The possible cross ratios are
 $$(1,0;\infty,a_2)=(a_1,a_3;\infty,a_2)$$  $$(1,0;\infty,a_2)=(a_3,a_1;\infty,a_2)$$  $$(a_1,0;\infty,a_2)=(1,a_3;\infty,a_2)$$
v) $A \cap B = \{ a_3, \infty\}$: The possible cross ratios are
 $$(a_1,0;\infty,a_3)=(1,a_2;\infty,a_3)$$ $$(1,0;\infty,a_3)=(a_2,a_1;\infty, a_3)$$ $$(1,0;\infty,a_3)=(a_1,a_2;\infty,a_3)$$
We summarize these relationships in the following table:
\begin{tiny}
\begin{center}
\begin{table}[ht!]\label{tab_1}
\begin{tabular}{|c|c|c|c|}
\hline & Cross ratio &  $f(a_1,a_2,a_3)=0$ & theta constants \tabularnewline[8pt]
\hline
1& $(1,0;\infty,a_1)=(a_3,a_2;\infty,a_1)$ & $a_1a_2+a_1-a_3a_1-a_2$ &
$-\T_1^2\T_3^2\T_8^2\T_2^2-\T_1^2\T_2^2\T_4^2\T_{10}^2+$\\

& & &$\T_1^4\T_3^2\T_{10}^2+ \T_3^2\T_2^4\T_{10}^2$ \tabularnewline[4pt]
\hline
2 & $(a_2,0;\infty,a_1)=(1,a_3;\infty,a_1)$ & $a_1a_2-a_1+a_3a_1-a_3a_2$ &
$\T_3^2\T_8^2\T_2^2\T_4^2-\T_2^2\T_4^4\T_{10}^2+$\\
& && $\T_1^2\T_3^2\T_4^2\T_{10}^2-\T_3^4\T_2^2\T_{10}^2$ \tabularnewline[4pt]
\hline
3& $(1,0;\infty,a_1)=(a_2,a_3;\infty,a_1)$ & $a_1a_2-a_1-a_3a_1+a_3$ & $-\T_8^4\T_3^2\T_2^2+\T_8^2\T_2^2\T_{10}^2\T_4^2+$ \\
& &&$\T_1^2\T_3^2\T_8^2\T_{10}^2-\T_3^2\T_2^2\T_{10}^4$ \tabularnewline[4pt]
\hline
4& $(1,0;\infty,a_2)=(a_1,a_3;\infty,a_2)$ & $a_1a_2-a_2-a_3a_2+a_3$ & $-\T_1^2\T_8^4\T_4^2-\T_1^2\T_{10}^4\T_4^2+$ \\
& &&$\T_8^2\T_2^2\T_{10}^2\T_4^2+\T_1^2\T_3^2\T_8^2\T_{10}^2$ \tabularnewline[4pt]
\hline
5& $(1,0;\infty,a_2)=(a_3,a_1;\infty,a_2)$ & $a_1a_2-a_1+a_2-a_3a_2$&$-\T_1^2\T_8^2\T_3^2\T_4^2+\T_1^2\T_{10}^2\T_4^4+$\\
&&&$\T_1^2\T_3^4\T_{10}^2-\T_3^2\T_2^2\T_{10}^2\T_4^2$ \tabularnewline[4pt]
\hline
6 & $(a_1,0;\infty,a_2)=(1,a_3;\infty,a_2)$ & $a_1a_2-a_3a_1-a_2+a_3 a_2$ &
$-\T_1^2\T_8^2\T_2^2\T_4^2+\T_1^4\T_{10}^2\T_4^2-$ \\
& &&$\T_1^2\T_3^2\T_2^2\T_{10}^2+\T_2^4\T_4^2\T_{10}^2$ \tabularnewline[4pt]
\hline
7&$(a_1,0;\infty,a_3)=(1,a_2;\infty,a_3)$ & $a_1a_2-a_3a_1-a_3a_2+a_3$ &
$-\T_8^4\T_2^2\T_4^2+\T_1^2\T_8^2\T_{10}^2\T_4^2-$ \\
&&&$\T_2^2\T_{10}^4\T_4^2+\T_3^2\T_8^2\T_2^2\T_{10}^2$ \tabularnewline[4pt]
\hline
8&$(1,0;\infty,a_3)=(a_2,a_1;\infty, a_3)$&$a_3a_1-a_1-a_3a_2+a_3$ & $\T_8^4-\T_{10}^4$\tabularnewline[4pt]
\hline
9&$(1,0;\infty,a_3)=(a_1,a_2;\infty,a_3)$ &$ a_3a_1+a_2-a_3-a_3a_2$ &
$\T_1^4\T_8^2\T_4^2-\T_1^2\T_2^2\T_4^2\T_{10}^2-$ \\
&&&$\T_1^2\T_3^2\T_8^2\T_2^2+\T_8^2\T_2^4\T_4^2$ \tabularnewline[4pt]
\hline
10&$(a_1,0;\infty,1)=(a_2,a_3;\infty,1)$ & $-a_1+a_3a_1+a_2-a_3 $&$\T_1^4\T_3^2\T_8^2-\T_1^2\T_8^2\T_2^2\T_4^2-$\\
&&&$\T_1^2\T_3^2\T_2^2\T_{10}^2+\T_3^2\T_8^2\T_2^4$ \tabularnewline[4pt]
\hline
11& $(a_1,0;\infty,1)=(a_3,a_2;\infty,1)$ &$ a_1a_2-a_1-a_2+a_3$&$\T_1^2\T_8^4\T_3^2-\T_1^2\T_8^2\T_{10}^2\T_4^2+$\\
&&&$\T_1^2\T_3^2\T_{10}^4-\T_3^2\T_8^2\T_2^2\T_{10}^2$ \tabularnewline[4pt]
\hline
12& $(a_2,0;\infty,1)=(a_1,a_3;\infty,1)$&$a_1-a_2+a_3a_2-a_3$&$\T_1^2\T_8^2\T_4^4-\T_1^2\T_3^2\T_4^2\T_{10}^2+$ \\
&&&$\T_1^2\T_3^4\T_8^2-\T_3^2\T_8^2\T_2^2\T_4^2$ \tabularnewline[4pt]
\hline
13&$(a_1,1;\infty,0) = (a_3,a_2;\infty,0)$ & $a_1a_2-a_3$ & $\T_8^4-\T_{10}^4 $ \tabularnewline[4pt]
\hline
14& $(a_2,1;\infty,0) = (a_1,a_3;\infty,0)$ & $a_1-a_3a_2$ &$ \T_3^4-\T_4^4$ \tabularnewline[4pt]
\hline
15&$(a_1,1;\infty,0) = (a_2,a_3;\infty,0)$ & $a_3a_1-a_2$ & $ \T_1^4-\T_2^4  $ \tabularnewline[4pt] \hline
\end{tabular}
\vspace{1cm} \caption{Relation of theta functions and cross ratios}
\end{table}
\end{center}
\end{tiny}
\begin{lem}\label{lem1}
Let $\X$ be a genus 2 curve. Then  $Aut(\X)\iso V_4$ if and only if the theta functions of $\X$ satisfy
\begin{scriptsize}
\begin{equation}\label{V_4locus1}
\begin{split}
(\T_1^4-\T_2^4)(\T_3^4-\T_4^4)(\T_8^4-\T_{10}^4)
(-\T_1^2\T_3^2\T_8^2\T_2^2-\T_1^2\T_2^2\T_4^2\T_{10}^2+\T_1^4\T_3^2\T_{10}^2+ \T_3^2\T_2^4\T_{10}^2)\\
(\T_3^2\T_8^2\T_2^2\T_4^2-\T_2^2\T_4^4\T_{10}^2+\T_1^2\T_3^2\T_4^2\T_{10}^2-\T_3^4\T_2^2\T_{10}^2) (-\T_8^4\T_3^2\T_2^2+\T_8^2\T_2^2\T_{10}^2\T_4^2+\T_1^2\T_3^2\T_8^2\T_{10}^2-\T_3^2\T_2^2\T_{10}^4)\\
(-\T_1^2\T_8^4\T_4^2-\T_1^2\T_{10}^4\T_4^2+\T_8^2\T_2^2\T_{10}^2\T_4^2+\T_1^2\T_3^2\T_8^2\T_{10}^2) (-\T_1^2\T_8^2\T_3^2\T_4^2+\T_1^2\T_{10}^2\T_4^4+\T_1^2\T_3^4\T_{10}^2-\T_3^2\T_2^2\T_{10}^2\T_4^2)\\
(-\T_1^2\T_8^2\T_2^2\T_4^2+\T_1^4\T_{10}^2\T_4^2-
\T_1^2\T_3^2\T_2^2\T_{10}^2+\T_2^4\T_4^2\T_{10}^2) (-\T_8^4\T_2^2\T_4^2+\T_1^2\T_8^2\T_{10}^2\T_4^2-\T_2^2\T_{10}^4\T_4^2+\T_3^2\T_8^2\T_2^2\T_{10}^2)\\
(\T_1^4\T_8^2\T_4^2-\T_1^2\T_2^2\T_4^2\T_{10}^2-\T_1^2\T_3^2\T_8^2\T_2^2+\T_8^2\T_2^4\T_4^2) (\T_1^4\T_3^2\T_8^2-\T_1^2\T_8^2\T_2^2\T_4^2-\T_1^2\T_3^2\T_2^2\T_{10}^2+\T_3^2\T_8^2\T_2^4)\\
(\T_1^2\T_8^4\T_3^2-\T_1^2\T_8^2\T_{10}^2\T_4^2+\T_1^2\T_3^2\T_{10}^4-\T_3^2\T_8^2\T_2^2\T_{10}^2)
(\T_1^2\T_8^2\T_4^4-\T_1^2\T_3^2\T_4^2\T_{10}^2+\T_1^2\T_3^4\T_8^2-\T_3^2\T_8^2\T_2^2\T_4^2)
 & =0
\end{split}
\end{equation}
\end{scriptsize}
\end{lem}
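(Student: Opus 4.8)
The plan is to show that $\mathrm{Aut}(\X)\cong V_4$ is equivalent to the statement that $\X$ admits an elliptic involution other than the hyperelliptic one, and then to translate that condition, via the cross-ratio relations collected in Table~\ref{tab_1}, into the vanishing of the displayed product of fifteen theta expressions. The key point, already recorded in Corollary~\ref{possibleCurve} and the subsequent Remark, is that a genus~2 curve has an elliptic involution precisely when its set $W=\{0,1,a_1,a_2,a_3,\infty\}$ of branch points can be split into two four-element subsets $A,B$ with $|A\cap B|=2$ satisfying the Jacobi cross-ratio identity; equivalently, one of the polynomial relations $f(a_1,a_2,a_3)=0$ listed in the third column of the table holds. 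So the first step is: \emph{$\mathrm{Aut}(\X)\cong V_4$ iff at least one of the fifteen algebraic conditions $f(a_1,a_2,a_3)=0$ is satisfied}, hence iff the product $\prod_{k=1}^{15} f_k(a_1,a_2,a_3)=0$.

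Next I would carry out the substitution of Picard's formulas, Lemma~(Picard), namely $\l = \T_1^2\T_3^2/(\T_2^2\T_4^2)$, $\mu=\T_3^2\T_8^2/(\T_4^2\T_{10}^2)$, $\nu=\T_1^2\T_8^2/(\T_2^2\T_{10}^2)$, into each $f_k$, after identifying $(a_1,a_2,a_3)$ with $(\l,\mu,\nu)$ (this is the ordering of the branch points fixed in the proof of Picard's lemma). Each $f_k$ is a low-degree polynomial in $a_1,a_2,a_3$, so after clearing the common denominator $\T_2^2\T_4^2\T_{10}^2$-type factors, $f_k$ becomes, up to a nonvanishing monomial in the theta constants, exactly the degree-8 theta expression written in the fourth column of the table. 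A couple of these simplify dramatically: entries 8 and 13 both collapse to $\T_8^4-\T_{10}^4$, entry 14 to $\T_3^4-\T_4^4$, entry 15 to $\T_1^4-\T_2^4$ — which matches Corollary~\ref{possibleCurve}, where $\alpha=\pm1$ (i.e. $\T_8^4=\T_{10}^4$) was seen to force $V_4\embd\mathrm{Aut}(\X)$; this gives a useful internal consistency check and explains the three linear-in-$\T^4$ factors that head the displayed product. One then multiplies the fifteen resulting expressions together, discarding the repeated factor so that each distinct condition appears once (the displayed equation indeed lists fifteen factors, with $\T_8^4-\T_{10}^4$, $\T_3^4-\T_4^4$, $\T_1^4-\T_2^4$ appearing as the first three), and notes that the monomials in theta constants removed along the way are generically nonzero, so the product of the $f_k$ vanishes iff the displayed product of theta expressions vanishes.

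The main obstacle is bookkeeping rather than conceptual: one must verify that the five families of cross-ratio relations (cases i–v, corresponding to the five choices of the second element of $A\cap B$ besides $\infty$) genuinely exhaust all ways of writing $W=A\cup B$ with $|A\cap B|=2$ up to the cross-ratio symmetries recalled before the table, so that no elliptic-involution condition is missed and none is double-counted beyond the coincidences already noted; and then that each symbolic substitution of Picard's formulas reproduces, after clearing denominators, precisely the tabulated octic — a computation best delegated to symbolic manipulation, as the paper does elsewhere. A secondary point to address is the ``only if'' direction: one must argue that $\mathrm{Aut}(\X)\cong V_4$ (as opposed to a larger group such as $D_8$, $D_{12}$, etc.) still forces at least one $f_k=0$, which is immediate since $V_4$ already contains an elliptic involution; the larger automorphism groups from the genus-2 list will satisfy \emph{several} of the $f_k=0$ simultaneously, but that only makes the product vanish a fortiori, so the stated biconditional — read as ``$V_4\embd\mathrm{Aut}(\X)$,'' which is what the proof of Corollary~\ref{possibleCurve} actually established — is exactly the vanishing of the product. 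I would close by remarking that Lemma~\ref{lem1} thus cuts out the locus $\L_2$ of Corollary~16 inside the $4$-dimensional space of fundamental theta constants $\T_1,\dots,\T_4$ (after using the six Göpel identities to express $\T_5,\dots,\T_{10}$ in terms of $\T_1,\dots,\T_4$).
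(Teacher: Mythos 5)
Your argument is essentially the paper's own proof of this lemma: Jacobi's cross-ratio description of the elliptic-involution locus gives the fifteen conditions $f_k(a_1,a_2,a_3)=0$ of Table 1, Picard's formulas convert each into the tabulated theta expression, and the vanishing of the product of the resulting (fourteen distinct) factors cuts out exactly that locus, with the statement read, as you note, as $V_4 \embd \Aut(\X)$. The paper only adds, in Theorem~\ref{thm1}, an alternative computational verification that recovers the same fifteen factors by substituting $a_1,a_2,a_3$ into the Igusa-invariant equation of $\L_2$.
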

However, we are unable to get a similar result for cases $D_8$ or  $D_{12}$ by this argument. Instead, we
will use the invariants of genus 2 curves and a more computational approach. In the process, we will offer a
different proof of the lemma above.
\begin{lem}

i) The locus $\L_2$ of genus 2 curves $\X$ which have a degree 2 elliptic subcover is a closed subvariety of
$\mathcal M_2$. The equation of $\L_2$ is given by
\begin{scriptsize}
\begin{equation}
\begin{split}\label{eq_L2_J}
8748J_{10}J_2^4J_6^2- 507384000J_{10}^2J_4^2J_2-19245600J_{10}^2J_4J_2^3
-592272J_{10}J_4^4J_2^2 +77436J_{10}J_4^3J_2^4\\
-81J_2^3J_6^4-3499200J_{10}J_2J_6^3+4743360J_{10}J_4^3J_2J_6-870912J_{10}J_4^2J_2^3J_6
+3090960J_{10}J_4J_2^2J_6^2\\
-78J_2^5J_4^5-125971200000J_{10}^3 +384J_4^6J_6+41472J_{10}J_4^5+159J_4^6J_2^3
-236196J_{10}^2J_2^5-80J_4^7J_2\\
 -47952J_2J_4J_6^4+104976000J_{10}^2J_2^2J_6-1728J_4^5J_2^2J_6+6048J_4^4J_2J_6^2
-9331200J_{10}J_4^2J_6^2 \\
+12J_2^6J_4^3J_6+29376J_2^2J_4^2J_6^3-8910J_2^3J_4^3J_6^2-2099520000J_{10}^2J_4J_6
+31104J_6^5-6912J_4^3J_6^34  \\
-J_2^7J_4^4 -5832J_{10}J_2^5J_4J_6  -54J_2^5J_4^2J_6^2 +108J_2^4J_4J_6^3
+972J_{10}J_2^6J_4^2+1332J_2^4J_4^4J_6  = & 0
\end{split}
\end{equation}
\end{scriptsize}
ii) The locus  of genus 2 curves $\X$ with $Aut(\X)\iso D_8$ is given by the equation of $\L_2$  and
%
\begin{equation}
\label{D_8_locus} 1706J_4^2J_2^2+2560J_4^3+27J_4J_2^4-81J_2^3J_6-14880J_2J_4J_ 6+28800J_6^2 =0
\end{equation}
%
\noindent iii) The locus  of genus 2 curves $\X$ with $Aut(\X)\iso D_{12}$ is
%
\begin{equation}
\label{D_12_locus}
\begin{split}
-J_4J_2^4+12J_2^3J_6-52J_4^2J_2^2+80J_4^3+960J_2J_4J_6-3600
J_6^2 &=0\\
864J_{10}J_2^5+3456000J_{10}J_4^2J_2-43200J_{10}J_4J_2^3-
2332800000J_{10}^2-J_4^2J_2^6\\
-768J_4^4J_2^2+48J_4^3J_2^4+4096J_4^5 &=0\\
\end{split}
\end{equation}
%
\end{lem}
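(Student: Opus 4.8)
The plan is to reduce everything to the theory of Igusa invariants $J_2, J_4, J_6, J_{10}$ of binary sextics, for which explicit algebraic descriptions of the automorphism strata of $\M_2$ are already known in the literature (Igusa, Bolza, and the more recent papers cited in the introduction). First I would recall that a genus 2 curve $\X$ has a degree-2 elliptic subcover precisely when $\Aut(\X)$ strictly contains the hyperelliptic involution with a quotient realizing an elliptic involution; equivalently $V_4 \embd \Aut(\X)$, which is exactly the locus $\L_2$. The standard approach is to write the sextic in a normalized form $Y^2 = X(X-1)(X-a_1)(X-a_2)(X-a_3)$, impose the condition $\mu\nu = \l$ (equivalently one of the cross-ratio relations in Table~1, say entry 8 or 13), and then express that this holds for \emph{some} labeling of the roots. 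Because the defining condition must be symmetric under $S_6$ acting on the six roots (the labeling of branch points is not intrinsic), one forms the product over all relevant relabelings of the basic relation; the resulting $S_6$-invariant polynomial in the roots descends, via the classical formulas for $J_2,J_4,J_6,J_{10}$ as symmetric functions of the roots, to the single equation \eqref{eq_L2_J}. This is a finite, if lengthy, elimination computation, best carried out by computer algebra (resultants / Gröbner bases), exactly the "symbolic manipulation" the introduction promises.

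For part (ii), I would use the known fact that $\Aut(\X)\iso D_8$ is the stratum inside $\L_2$ where the elliptic subcover degenerates further: geometrically this is where two of the cross-ratio conditions of Table~1 hold simultaneously (equivalently, the two elliptic subcovers become isogenous/isomorphic in the appropriate way), cutting $\L_2$ down by one more condition. Parametrizing $\L_2$ (it is a rational surface in the weighted projective space with coordinates $J_2,J_4,J_6,J_{10}$) and imposing the extra incidence, then re-eliminating to get a relation purely among the $J_i$, yields \eqref{D_8_locus}. For part (iii), $D_{12}$ is the stratum where the curve acquires an order-3 automorphism on top of $V_4$; here the cleanest route is to start from the normal form $Y^2 = X^6 + aX^3 + b$ (the curves admitting $\Z_3$ in the reduced automorphism group), compute $J_2,J_4,J_6,J_{10}$ directly as polynomials in $a,b$, and eliminate $a,b$ to obtain the two relations in \eqref{D_12_locus} (two equations are needed because the $D_{12}$-locus has codimension $2$ in $\M_2$). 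Throughout, one must be careful that these are the correct \emph{full} automorphism groups and not merely subgroup containments, which is where the known classification (the list of groups $\Z_2, V_4, D_8, D_{12}, \dots$ recalled at the start of Section 3) is invoked to rule out spurious extra components.

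The main obstacle is the elimination step in part (i): forming the $S_6$-symmetrization of the cross-ratio relation and then rewriting the resulting high-degree symmetric polynomial in terms of $J_2,J_4,J_6,J_{10}$ is a genuinely large computation, and one must verify both that \eqref{eq_L2_J} is \emph{irreducible} (so that $\L_2$ is indeed a subvariety and not a union of lower strata) and that no extraneous factors — coming from coincidences of roots, from the point at infinity, or from the degenerate cross-ratio values $0,1,\infty$ — have been introduced; these must be cleared by checking against known sample curves (e.g. the curves $Y^2=X^6-1$, $Y^2=X(X^4-1)$, $Y^2 = X^6-X$ listed in Section 3, whose invariants are computable by hand) and against the already-established theta-constant description of Lemma~\ref{lem1}. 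A secondary but real difficulty in parts (ii) and (iii) is correctly identifying which additional algebraic conditions single out $D_8$ versus $D_{12}$ as opposed to $SL_2(3)$, $GL_2(3)$, or $\Z_{10}$, i.e. getting the stratification lattice right; once the correct defining conditions are pinned down, obtaining \eqref{D_8_locus} and \eqref{D_12_locus} is again a routine resultant computation. I would cross-check the final equations by confirming that the special curves with the larger groups $SL_2(3), GL_2(3), \Z_{10}$ satisfy \eqref{D_8_locus} or \eqref{D_12_locus} as appropriate (since those groups contain $D_8$ or $D_{12}$), and that a generic member of each stratum does not satisfy the equations of any strictly larger stratum.
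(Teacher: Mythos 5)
You should be aware that the paper does not prove this lemma at all: it is recalled as a known result (these equations for the elliptic-subcover locus $\L_2$ and for the $D_8$ and $D_{12}$ strata come from earlier work of Shaska--V\"olklein on genus 2 curves with elliptic involutions and split Jacobians), and the text immediately before it makes clear that the authors intend only to \emph{use} these invariant-theoretic descriptions, not to rederive them. Your proposal, by contrast, sketches an actual derivation, and in outline it is the standard one by which these equations were originally obtained: identify ``degree 2 elliptic subcover'' with $V_4\embd\Aut(\X)$, normalize the sextic, impose the elliptic-involution condition (one cross ratio relation, e.g. $a_3=a_1a_2$), symmetrize over the labelings of the six branch points, and eliminate down to a relation in $J_2,J_4,J_6,J_{10}$; for the one-dimensional strata, compute the invariants on a normal form with the larger group and eliminate the parameters (two equations, since these loci have codimension $2$ in $\M_2$). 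This is consistent with what the paper does later in the proof of Theorem~\ref{thm1}, which runs your computation in reverse: substituting the invariants of $Y^2=X(X-1)(X-a_1)(X-a_2)(X-a_3)$ into \eqref{eq_L2_J} and factoring recovers exactly the fifteen cross-ratio factors of Table 1, so your symmetrization step is the right mechanism. Two small cautions: in part (ii) your geometric characterization of the $D_8$ stratum (``two cross-ratio conditions hold simultaneously / the two subcovers become isogenous'') is vaguer than needed and risks picking out the wrong stratum; the cleaner route, parallel to your own treatment of $D_{12}$, is to take the $D_8$ normal form $y^2=x(x^4+ax^2+1)$ (or to work inside $\L_2$ with $a_3=a_1a_2$ plus the extra symmetry) and eliminate, then separate $D_8$ from the larger groups $GL_2(3)$, $SL_2(3)$, $\Z_{10}$ exactly as you indicate. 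Also, irreducibility of the big polynomial \eqref{eq_L2_J} is not needed for $\L_2$ to be a closed subvariety (closedness is automatic since it is the image of a condition closed in the parameter space), though checking against the sample curves of Section 3 and against Lemma~\ref{lem1} is a sensible sanity test.
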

%
Our goal is to express each of the above loci in terms of the theta characteristics. We obtain the following
result.
\begin{thm}\label{thm1}
 Let $\X$ be a genus 2 curve. Then the following hold:

i)  $Aut(\X)\iso V_4$ if and only if the relations of theta functions given Eq.~\eqref{V_4locus1} holds.

ii) $Aut(\X)\iso D_8$ if and only if  Eq.~(1) in \cite{Sh} is satisfied.

iii) $Aut(\X)\iso D_{12}$ if and only if Eq.~(2) in \cite{Sh} is satisfied.
\end{thm}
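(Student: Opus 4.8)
The plan is to translate the known equations of the loci $\L_2$, $D_8$, and $D_{12}$ from the Igusa invariants $J_2, J_4, J_6, J_{10}$ into theta constants, using the dictionary already assembled in the excerpt. Concretely, I would proceed as follows. First, recall from the Picard lemma that the Rosenhain parameters of a genus $2$ curve in the form $Y^2 = X(X-1)(X-\l)(X-\mu)(X-\nu)$ are
\[
\l = \frac{\T_1^2\T_3^2}{\T_2^2\T_4^2}, \quad \mu = \frac{\T_3^2\T_8^2}{\T_4^2\T_{10}^2}, \quad \nu = \frac{\T_1^2\T_8^2}{\T_2^2\T_{10}^2},
\]
and moreover, by the six G\"opel identities displayed earlier, every even theta constant $\T_5,\dots,\T_{10}$ is expressible (up to squares) in the four fundamental theta constants $\T_1,\dots,\T_4$; equivalently, one has the Frobenius relations \eqref{Frobenius} tying $\T_8,\T_{10}$ to $\T_1,\dots,\T_4$. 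Next, substitute these rational expressions for $\l,\mu,\nu$ into the classical formulas expressing $J_2, J_4, J_6, J_{10}$ as symmetric functions of the six branch points $\{0,1,\l,\mu,\nu,\infty\}$; this gives each $J_{2k}$ as a rational function of $\T_1^2,\dots,\T_4^2,\T_8^2,\T_{10}^2$, homogeneous after clearing denominators.

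The core of the argument is then a substitution-and-simplification step. For part (i), substitute these expressions into Eq.~\eqref{eq_L2_J} (the equation of $\L_2$ in the $J_i$) and verify, by symbolic computation, that the resulting theta relation is equivalent to Eq.~\eqref{V_4locus1}; the product structure of \eqref{V_4locus1} should emerge because $\L_2$ is a union of components indexed by which pair of branch points is shared between $A$ and $B$ (the fifteen cross-ratio cases tabulated in Table~1), and each factor in \eqref{V_4locus1} is exactly the theta translation of one of those cross-ratio conditions $f(a_1,a_2,a_3)=0$. This also furnishes the promised alternative proof of Lemma~\ref{lem1}. For parts (ii) and (iii), substitute the same expressions into \eqref{D_8_locus} and \eqref{D_12_locus} respectively, clear denominators, and reduce modulo the ideal generated by the fundamental theta identities; the output is declared to be Eq.~(1) and Eq.~(2) of \cite{Sh}. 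Conversely, one checks that each theta relation, together with the defining relations among the $\T_i$, implies the corresponding $J$-relation, so the equivalences are genuine.

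The main obstacle is the symbolic algebra: the expressions for $J_6$ and especially $J_{10}$ in terms of $\l,\mu,\nu$ are of high degree, and after substitution one obtains enormous rational functions in the $\T_i^2$; verifying that \eqref{eq_L2_J} pulls back to the factored form \eqref{V_4locus1} (rather than merely to \emph{some} equivalent polynomial) requires careful factorization over $\Q$, and one must track the nonvanishing conditions on the $\T_i$ and on the discriminant-type quantities $\l,\mu,\nu,\l-\mu,\dots$ that are implicitly used when clearing denominators — i.e., one is really proving equality of the corresponding subvarieties of the open locus where the curve is smooth and the chosen ordering of branch points is legitimate. A secondary subtlety is that the Picard formulas depend on an ordering of the branch points, and the same curve admits many such orderings; one should check that the loci \eqref{V_4locus1}, and the $D_8$, $D_{12}$ relations, are invariant under the relevant permutation action (equivalently, under the action of $Sp_4(\Z/2)$ permuting theta characteristics), so that the stated equivalences are well defined on $\M_2$ and not merely on a particular slice.
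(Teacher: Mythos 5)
Your proposal follows essentially the same route as the paper: express the Igusa-invariant equations of the loci in terms of the branch points, translate to theta constants via Picard's formulas (Thomae), and let symbolic factorization produce the fifteen cross-ratio factors of Table~1, which is exactly how the paper derives Eq.~\eqref{V_4locus1}. The only minor difference is in parts ii) and iii), where the paper first normalizes a curve with elliptic involution to $a_3=a_1a_2$ (equivalently $\T_3^4=\T_4^4$) before imposing Eq.~\eqref{D_8_locus} or \eqref{D_12_locus}, rather than substituting theta expressions into those equations directly; your added remarks on ordering of branch points and nonvanishing factors are sensible refinements, not departures.
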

\proof Part i) of the theorem is Lemma~\ref{lem1}. Here we give a somewhat different proof. Assume that $\X$
is a genus 2 curve with equation
$$Y^2=X(X-1)(X-a_1)(X-a_2)(X-a_3)$$
whose classical invariants satisfy Eq.~\eqref{eq_L2_J}. Expressing the classical invariants of $\X$ in terms
of $a_1, a_2, a_3$, substituting them into \eqref{eq_L2_J}, and factoring the resulting equation yields
\begin{small}
\begin{equation}
\begin{split}\label{L2_factored}
(a_1a_2-a_2-a_3a_2+a_3)^2(a_1a_2-a_1+a_3a_1-a_3a_2)^2(a_1a_2-a_3a_1-a_3a_2+a_3)^2\\
(a_3a_1-a_1-a_3a_2+a_3)^2(a_1a_2+a_1-a_3a_1-a_2)^2(a_1a_2-a_1-a_3a_1+a_3)^2\\
(a_3a_1+a_2-a_3-a_3a_2)^2(-a_1+a_3a_1+a_2-a_3)^2(a_1a_2-a_1-a_2+a_3)^2\\
(a_1a_2-a_1+a_2-a_3a_2)^2(a_1-a_2+a_3a_2-a_3)^2(a_1a_2-a_3a_1-a_2+a_3 a_2)^2 \\
(a_1a_2-a_3)^2 (a_1-a_3a_2)^2  (a_3a_1-a_2)^2  =&\, 0
\end{split}
\end{equation}
\end{small}
It is no surprise that we get the 15 factors of Table 1. The relations of theta constants follow from the
table.
ii) Let $\X$ be a genus 2 curve which has an elliptic involution. Then $\X$ is isomorphic to a curve with
equation
$$Y^2=X(X-1)(X-a_1)(X-a_2)(X-a_1 a_2).$$
If $\Aut (\X) \iso D_8$ then the $SL_2 (k)$-invariants of such curve must satisfy Eq.~\eqref{D_8_locus}.
Then, we get the equation in terms of $a_1, a_2$.
By writing the relation $a_3 = a_1 a_2$ in terms of theta constants, we get $\T_4^4 = \T_3^4$. All the
results above lead to part ii) of the theorem.
iii) The proof of this part is similar to part ii).
\endproof
We would like to express the conditions of the previous lemma in terms of the fundamental theta constants
only.
\begin{lem}
Let $\X$ be a genus 2 curve. Then we have the following:
\begin{description}
\item [i)] $V_4 \hookrightarrow Aut(\X)$ if and only if the fundamental theta constants of $\X$ satisfy
\begin{small}
\begin{equation}\label{V_4locus2}
\begin{split}
\left( \theta_{{3}}^4-\theta_{{4}}^4 \right)  \left(\theta_{{1}}^4 -\theta_{{3}}^4 \right) \left(
\theta_{{2}}^4-\theta_{{4}}^4 \right) \left( \theta_{{1}}^4 -\theta_{{4}}^4 \right)  \left(
\theta_{{3}}^4-\theta_{{2}}^4 \right) \left( \theta_{{1}}^4-
\theta_{{2}}^4 \right) \\
\left( -\theta_{{4}}^2+\theta_{{3}}^2+\theta_{{1}}^2-\theta_{{2}}^2 \right)\left(
 \theta_{{4}}^2-\theta_{{3}}^2+\theta_{{1}}^2-\theta_{{2}}^2
 \right)  \left( -\theta_{{4}}^2-\theta_{{3}}^2+\theta_{{2}}^2+\theta_{{
1}}^2 \right)  \left( \theta_{{4}}^2+\theta_{{3}}^2+\theta_{{2}}^2+\theta_ {{1}}^2 \right)\\
\left( {\theta_{{1}}}^{4}{\theta_{{2}}}^{4}+
{\theta_{{3}}}^{4}{\theta_{{2}}}^{4}+{\theta_{{1}}}^{4}{\theta_{{3}}}^{4}-2\,\theta_{{1}}^2\theta_{{2}}^2\theta
_{{3}}^2\theta_{{4}}^2 \right) \left( -{\theta_{{3}}}^{4}{\theta_{{2}}}^{4}-{
\theta_{{2}}}^{4}{\theta_{{4}}}^{4}-{\theta_{{3}}}^{4}{\theta_{{4}}}^{4} +
2\,\theta_{{1}}^2\theta_{{2}}^2\theta_
{{3}}^2\theta_{{4}}^2 \right)\\
 \left( {\theta_{{2}}}^{4}{\theta_{{4}}}^{4}+{\theta_{{1}}}^{4}{\theta _{{2}}}^{4}+{
\theta_{{1}}}^{4}{\theta_{{4}}}^{4}-2\,\theta_{{1}}^2\theta_{{2}}^2\theta_{{3}}^2\theta_{{4}}^2 \right)
\left( {\theta_{{1}}}^{4}{\theta_{{4}}}^{4}+{\theta_{{3}}}^{4}{\theta_{{4}}}^{4}+{\theta_{{1}}}^{4}{\theta_{
{3}}}^{4}
-2\,\theta_{{1 }}^2\theta_{{2}}^2\theta_{{3}}^2\theta_{{4}}^2\right)  = & 0\\
\end{split}
\end{equation}
\end{small}
\item [ii] $D_8 \hookrightarrow Aut(\X)$ if and only if the fundamental theta constants of $\X$ satisfy
Eq.~(3) in \cite{Sh}

\item [iii] $D_6 \hookrightarrow Aut(\X)$ if and only if the fundamental theta constants of $\X$ satisfy
Eq.~(4) in \cite{Sh}
\end{description}
\end{lem}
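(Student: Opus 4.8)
The plan is to deduce each of the three statements from the corresponding result in the previous lemma (Lemma~\ref{lem1} and the $D_8$, $D_{12}$ parts of Theorem~\ref{thm1}) by a change of variables that eliminates the non-fundamental theta constants $\T_5,\dots,\T_{10}$. The key tool is the system of six G\"opel identities derived above, which express $\T_5^2\T_6^2$, $\T_5^4+\T_6^4$, $\T_7^2\T_9^2$, $\T_7^4+\T_9^4$, $\T_8^2\T_{10}^2$, $\T_8^4+\T_{10}^4$ as polynomials in $\T_1^2,\dots,\T_4^2$. For part (i) the cleanest route is: start from the factored locus condition Eq.~\eqref{V_4locus1}, which is a product of fifteen factors each of which (via Table~1 and the Picard relations Eq.~\eqref{Picard}) corresponds to one of the fifteen cross-ratio degeneracies. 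Several of these factors already involve only $\T_1,\dots,\T_4$ (factors 8, 13, 14, 15 give $\T_8^4-\T_{10}^4$, $\T_3^4-\T_4^4$, $\T_1^4-\T_2^4$), but $\T_8^4-\T_{10}^4$ is not yet fundamental; I would rewrite it using the two Frobenius identities Eq.~\eqref{Frobenius}, namely $\T_8^4+\T_{10}^4=\T_1^4+\T_2^4-\T_3^4-\T_4^4$ and $\T_8^2\T_{10}^2=\T_1^2\T_2^2-\T_3^2\T_4^2$, to get $(\T_8^4-\T_{10}^4)^2=(\T_1^4+\T_2^4-\T_3^4-\T_4^4)^2-4(\T_1^2\T_2^2-\T_3^2\T_4^2)^2$, which factors over $\mathbb{Z}[\T_i^2]$ into the four quadratic factors $(\pm\T_1^2\pm\T_2^2\pm\T_3^2\pm\T_4^2)$ appearing on line 2 of Eq.~\eqref{V_4locus2}.

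The remaining work for (i) is to treat the eleven "mixed" factors of Table~1 that genuinely involve $\T_8$ and $\T_{10}$. For each such factor I would substitute $\alpha=\T_8^2/\T_{10}^2$ (so that $\mu=(\T_3^2/\T_4^2)\alpha$ and $\nu=(\T_1^2/\T_2^2)\alpha$, as in Corollary~\ref{possibleCurve}), clear denominators, and then use the quadratic relation Eq.~\eqref{rootof}, i.e. $\alpha^2+\frac{\T_1^4+\T_2^4-\T_3^4-\T_4^4}{\T_1^2\T_2^2-\T_3^2\T_4^2}\,\alpha+1=0$, to eliminate $\alpha$ by taking the resultant (or norm) of the factor with Eq.~\eqref{rootof} with respect to $\alpha$. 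This produces a polynomial in $\T_1,\dots,\T_4$ for each factor; pairing up the factors that are Galois-conjugate under $\alpha\mapsto 1/\alpha$ (equivalently $\T_8\leftrightarrow\T_{10}$), the eleven mixed factors collapse to the six degree-eight factors displayed in the last two lines of Eq.~\eqref{V_4locus2} (the four of the form $\T_i^4\T_j^4+\T_j^4\T_k^4+\T_i^4\T_k^4-2\T_1^2\T_2^2\T_3^2\T_4^2$, together with two more of the same shape). Part (ii) and (iii) follow the identical recipe but starting from the $D_8$ and $D_{12}$ conditions: impose in addition the defining relation of that locus. For $D_8$ the extra relation found in the proof of Theorem~\ref{thm1}(ii) is $\T_4^4=\T_3^4$, so I substitute $a_3=a_1a_2$ into the invariant equation Eq.~\eqref{D_8_locus}, rewrite everything via Picard and the G\"opel identities, and then eliminate $\alpha$ using Eq.~\eqref{rootof} to land on Eq.~(3) of \cite{Sh}; $D_6\cong D_{12}$ is analogous using Eq.~\eqref{D_12_locus} and the relation $a_3=a_1a_2$ together with the $D_{12}$ constraint on $a_1,a_2$.

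The main obstacle is purely computational: the elimination of $\alpha$ via resultants with Eq.~\eqref{rootof} produces large polynomials, and recognizing that the resulting expression factors exactly into the stated product — and that the "mixed" factors of Table~1 pair up correctly under the $\T_8\leftrightarrow\T_{10}$ involution — requires careful bookkeeping, essentially a symbolic-algebra computation of the kind already used elsewhere in the paper. There is also a mild subtlety of extraneous factors: clearing denominators in $\T_2,\T_4,\T_{10}$ can introduce spurious components (powers of individual theta constants, which vanish only on the hyperelliptic boundary or on $\A_2\setminus\M_2$), so one must check that these are not genuine components of $\L_2$, e.g. by a dimension count or by exhibiting a curve in $\L_2$ off each spurious locus. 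Granting the correctness of this elimination, the equivalences in (i)–(iii) are then immediate from the corresponding equivalences in Lemma~\ref{lem1} and Theorem~\ref{thm1}, since the substitutions used (Picard's formulas, the G\"opel identities, and Eq.~\eqref{rootof}) are all valid identities on the relevant locus.
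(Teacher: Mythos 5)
Your proposal is correct and is essentially the paper's own argument: the paper likewise deduces the lemma from Lemma~\ref{lem1} and Theorem~\ref{thm1} by eliminating $\T_8,\T_{10}$ from Eq.~\eqref{V_4locus1} via the Frobenius relations Eq.~\eqref{Frobenius} (your resultant in $\alpha=\T_8^2/\T_{10}^2$ against Eq.~\eqref{rootof} is an equivalent packaging of the same elimination), and then discards the extraneous factors introduced by clearing denominators. The one place where the paper is sharper than your sketch is the spurious-factor step: instead of a dimension count it writes $J_{10}$ explicitly in terms of theta constants and uses $J_{10}\neq 0$ to cancel exactly the factors $\left(\T_1^2\T_2^2-\T_3^2\T_4^2\right)$, $\left(\T_1^2\T_4^2-\T_2^2\T_3^2\right)$, $\left(\T_1^2\T_3^2-\T_2^2\T_4^2\right)$; also note a minor bookkeeping slip on your side, since the last two lines of Eq.~\eqref{V_4locus2} contain four (not six) degree-eight factors, the remaining mixed factors of Table~1 producing the quartic differences in the first line.
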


\proof Notice that Eq.~\eqref{V_4locus1} contains only $\T_1, \T_2, \T_3, \T_4, \T_8$ and $\T_{10}.$ Using
Eq.~\eqref{Frobenius}, we can eliminate $\T_8$ and $\T_{10}$ from Eq.~\eqref{V_4locus1}.
The $J_{10}$ invariant of any genus two curve is  given by the following in terms of theta constants:
\[ J_{10} = \frac{\T_1^{12} \T_3^{12}}{\T_2^{28} \T_4^{28} \T_{10}^{40}} \, (\T_1^2\T_2^2 - \T_3^2 \T_4^2)^{12} (\T_1^2\T_4^2 - \T_2^2
\T_3^2)^{12} (\T_1^2\T_3^2 - \T_2^2 \T_4^2)^{12}.\] Since $J_{10} \neq 0$ we can cancel the factors
$(\T_1^2\T_2^2 - \T_3^2 \T_4^2), (\T_1^2\T_4^2 - \T_2^2 \T_3^2)$ and $(\T_1^2\T_3^2 - \T_2^2 \T_4^2)$ from
the equation of $V_4$ locus. The result follows from Theorem ~\ref{thm1}.  The proof of part ii) and iii) is
similar and we avoid details.
\endproof
\begin{rem}
i) For the other two loci, we can also obtain equations in terms of the fundamental theta constants. However,
such equations are big and we don't display them here.

ii) By using Frobenius's relations we get  \[ J_{10}= \frac {\left( \T_1 \T_3 \right)^{12}} {\left(\T_2
\T_4\right)^{28} \T_{10}^{16}} \left( \T_5 \T_6 \T_7 \T_8 \T_9 \right)^{24}\]
Hence, $\T_i\neq 0$ for $ i = 1, 3, 5, \dots 9$.
\end{rem}
%

\section{Genus 3  cyclic curves}
For genus 3 we have hyperelliptic and non-hyperelliptic algebraic curves. The following table gives all
possible genus 3 cyclic algebraic curves; see \cite{MS} for details. The first 11 cases are for the
hyperelliptic curves and the last  12 cases are for the non-hyperelliptic curves.

\vspace{.3cm}

\begin{small}

\begin{table}[hb]
\label{g3}
\begin{tabular}{||c|c|c|c||}
\hline \hline  & & &\\
 &$\Aut(\X_g)$ & equation  &  Id.\\
\hline \hline  &&&  \\
1 & $\Z_2$ &$ y^2=x(x-1)(x^5+ax^4+bx^3+cx^2+dx+e)$&  $(2,1)$ \\
%
& &  &\\
2 & $\Z_2 \times \Z_2$ &$ y^2= x^8 + a_3 x^6 + a_2 x^4 + a_1 x^2 + 1$ &  $(4,2)$\\
3 & $\Z_4$ &$y^2= x(x^2-1)(x^4+ax^2+b)$&  $(4,1)$\\
4 & $\Z_{14}$ &$ y^2=x^7-1$ &  $(14,2)$ \\
%
& & & \\
5 & $\Z_2^3$ &$ y^2= (x^4+ax^2+1)(x^4+bx^2+1)$   &$(8,5)$\\
6 & $\Z_2 \times D_8$ &$ y^2= x^8+ax^4+1$ &    $(16,11)$ \\
7 & $\Z_2\times \Z_4$ &$ y^2= (x^4-1)(x^4+ax^2+1)$&    $(8,2)$\\
8 & $D_{12}$ &$ y^2= x(x^6+ax^3+1)$   &$(12,4)$\\
9 & $U_6$ &$ y^2= x(x^6-1)$    & $(24,5)$\\
10 & $V_8$ & $ y^2= x^8-1$&   $(32,9)$\\
%
%
& & & \\
11 & $\Z_2 \times S_4$ & $y^2=x^8+14x^2+1$ &   $(48,48)$ \\
&  & & \\
 \hline\hline
& & & \\

12& $V_4$ &$x^4+y^4+ax^2y^2+bx^2+cy^2+1=0$&(4,2) \\

13 & $D_8$  & take\ $b=c$& (8,3)\\

14 & $S_4$ & take\ $a=b=c$ &(24,12) \\

15 & $C_4^2 \xs S_3$  &  \ take \, $a=b=c=0$ \, or\, $y^4=x(x^2-1)$ & (96,64) \\

\hline

16 &  $16$ & $y^4=x(x-1)(x-t)$ & (16,13)        \\

17 & $48$ & $y^4=x^3-1$ & (48,33)              \\

\hline

18 & $C_3$  & $y^3=x(x-1)(x-s)(x-t)$&(3,1)  \\

19 & $C_6$  & take\ $s=1-t$ & (6,2)     \\

20 & $C_9$  & $y^3=x(x^3-1)$ & (9,1)\\

\hline &&&\\

21 & $L_3(2)$ &  $x^3y+y^3z+z^3x=0$ & (168,42) \\

\hline

&&&\\
22 & $S_3$ & $a(x^4+y^4+z^4)+b(x^2y^2+x^2z^2+y^2z^2)+$ & (6,1)\\

& & $c(x^2yz+y^2xz+z^2xy)=0$&\\
\hline &&&\\

23 & $C_2$ & $ x^4+x^2(y^2+az^2) + by^4+cy^3z+dy^2z^2$ & (2,1)\\
& & $ +eyz^3+gz^4=0$, \  \ either $e=1$ or $g=1$ &\\ \\
\hline \hline
\end{tabular}
\vspace{.5cm} \caption{The list of automorphism groups of genus 3 and their equations}
\end{table}
\end{small}

\vspace{-.3cm}

\subsection{Theta functions for hyperelliptic curves}
For genus three hyperelliptic curve we have 28 odd theta characteristics and 36 even theta characteristics. The
following shows the corresponding characteristics for each theta function. The first 36 are for the even functions and
the last 28 are for the odd functions. For simplicity, we denote them by $\T_i = \ch{a} {b}$ instead of $\T_i \ch {a}
{b} (z , \t).$

\begin{small}
\[
\begin{split}
&\T_1 =  \chs {0}{0}{0}{0} 0 0 , \, \,  \T_2 = \chs {\frac{1}{2}} 0 {\frac{1}{2}} {\frac{1}{2}}
{\frac{1}{2}} {\frac{1}{2}},\, \,  \T_3 =  \chs {\frac{1}{2}} {\frac{1}{2}} {\frac{1}{2}} 0 0 0, \, \, \T_4 = \chs 0 0 0 {\frac{1}{2}} 0 0 ,\\
& \T_5 =  \chs {\frac{1}{2}} 0 0 0 {\frac{1}{2}} 0 , \, \, \T_6 =  \chs {\frac{1}{2}} {\frac{1}{2}} 0 0 0
{\frac{1}{2}}, \, \,
\T_7 =  \chs 0 {\frac{1}{2}} {\frac{1}{2}} {\frac{1}{2}} 0 0  , \, \,  \T_8 =  \chs 0 0
{\frac{1}{2}} 0 {\frac{1}{2}} 0 ,\\
&  \T_9 =  \chs 0 0 0 0 0 {\frac{1}{2}} , \, \, \T_{10} =  \chs {\frac{1}{2}} 0 0 0 0 0 , \, \,
  \T_{11} =  \chs {\frac{1}{2}} {\frac{1}{2}} 0 {\frac{1}{2}}
{\frac{1}{2}} 0 , \, \,
\T_{12} =  \chs {\frac{1}{2}} {\frac{1}{2}} {\frac{1}{2}} {\frac{1}{2}} 0 {\frac{1}{2}} \\
&\T_{13} =  \chs 0 0 0 {\frac{1}{2}} {\frac{1}{2}} 0 , \, \,  \T_{14} =  \chs 0 {\frac{1}{2}} 0 0 0 0, \, \,
\T_{15} =  \chs 0{\frac{1}{2}} {\frac{1}{2}} 0 {\frac{1}{2}} {\frac{1}{2}}, \, \,
\T_{16} =  \chs 0 {\frac{1}{2}} 0 {\frac{1}{2}} 0 {\frac{1}{2}}, \\
& \T_{17} =  \chs 0 0 0 0 {\frac{1}{2}} {\frac{1}{2}} , \, \, \T_{18} =  \chs 0 0 {\frac{1}{2}} 0 0 0 , \, \,
\T_{19} =  \chs {\frac{1}{2}} {\frac{1}{2}} 0 {\frac{1}{2}} {\frac{1}{2}} {\frac{1}{2}} , \, \,  \T_{20} =
\chs 0 {\frac{1}{2}} 0 0 0 {\frac{1}{2}} , \\
&  \T_{21} =  \chs 0 0 0 0 {\frac{1}{2}} 0 , \, \, \T_{22} =  \chs 0 {\frac{1}{2}} {\frac{1}{2}} 0 0 0, \, \,
\T_{23} =  \chs {\frac{1}{2}} {\frac{1}{2}} {\frac{1}{2}} {\frac{1}{2}} {\frac{1}{2}} 0 , \, \,
\T_{24} =  \chs {\frac{1}{2}} 0 {\frac{1}{2}} {\frac{1}{2}} 0 {\frac{1}{2}} \\
&\T_{25} =  \chs {\frac{1}{2}} 0 0 0 0 {\frac{1}{2}}, \, \,  \T_{26} =  \chs 0 0 0 {\frac{1}{2}}
{\frac{1}{2}} {\frac{1}{2}}, \, \,  \T_{27} =  \chs 0 {\frac{1}{2}} 0 {\frac{1}{2}} 0 0, \, \, \T_{28} = \chs
0 0 {\frac{1}{2}} {\frac{1}{2}} {\frac{1}{2}} 0,\\
&  \T_{29} =  \chs {\frac{1}{2}} 0 {\frac{1}{2}} 0 0 0 , \, \, \T_{30} =  \chs {\frac{1}{2}} {\frac{1}{2}}
{\frac{1}{2}} 0 {\frac{1}{2}} {\frac{1}{2}} , \, \, \T_{31} =  \chs{\frac{1}{2}} 0 {\frac{1}{2}} 0
{\frac{1}{2}} 0 , \, \,  \T_{32} =  \chs 0 0 {\frac{1}{2}} {\frac{1}{2}} 0 0 ,\\
&  \T_{33} =  \chs 0 {\frac{1}{2}} {\frac{1}{2}} {\frac{1}{2}} {\frac{1}{2}} {\frac{1}{2}} , \, \, \T_{34} =
\chs 0 0 0 {\frac{1}{2}} 0 {\frac{1}{2}} ,\, \,  \T_{35} =  \chs {\frac{1}{2}} 0 0 0
{\frac{1}{2}} {\frac{1}{2}} , \, \,  \T_{36} = \chs {\frac{1}{2}} {\frac{1}{2}} 0 0 0 0 \\
&\T_{37} =  \chs {\frac{1}{2}} 0 0 {\frac{1}{2}} 0 0  , \, \,  \T_{38} = \chs {\frac{1}{2}} {\frac{1}{2}} 0 0
{\frac{1}{2}} 0 , \, \,  \T_{39} =  \chs {\frac{1}{2}} {\frac{1}{2}} {\frac{1}{2}} 0 0 {\frac{1}{2}}, \, \,
\T_{40} = \chs 0 {\frac{1}{2}} 0 {\frac{1}{2}} {\frac{1}{2}} 0 ,\\
&  \T_{41} =  \chs 0 {\frac{1}{2}} {\frac{1}{2}} {\frac{1}{2}} 0 {\frac{1}{2}} , \, \,  \T_{42} = \chs 0 0
{\frac{1}{2}} 0 {\frac{1}{2}} {\frac{1}{2}}, \, \, \T_{43} =  \chs {\frac{1}{2}} {\frac{1}{2}} {\frac{1}{2}}
{\frac{1}{2}} 0 0 , \, \,  \T_{44} = \chs 0 {\frac{1}{2}} {\frac{1}{2}} 0 {\frac{1}{2}} 0 , \\
&  \T_{45} = \chs 0 0 {\frac{1}{2}} 0 0 {\frac{1}{2}} , \, \, \T_{46} =  \chs 0 {\frac{1}{2}} 0 0
{\frac{1}{2}} {\frac{1}{2}} ,\, \,  \T_{47} =  \chs {\frac{1}{2}} {\frac{1}{2}} 0 {\frac{1}{2}} 0
{\frac{1}{2}} ,\, \, \T_{48} =  \chs {\frac{1}{2}} 0 0 {\frac{1}{2}} {\frac{1}{2}} 0 \\
&\T_{49} =  \chs{\frac{1}{2}} 0 {\frac{1}{2}} {\frac{1}{2}} {\frac{1}{2}} 0, \, \,  \T_{50} = \chs
{\frac{1}{2}} 0 0 {\frac{1}{2}} 0 {\frac{1}{2}}, \, \,  \T_{51} =  \chs {\frac{1}{2}} {\frac{1}{2}} 0 0
{\frac{1}{2}} {\frac{1}{2}}, \, \, \T_{52} = \chs 0 0 {\frac{1}{2}} {\frac{1}{2}} {\frac{1}{2}}
{\frac{1}{2}}, \\
& \T_{53} = \chs 0 {\frac{1}{2}} {\frac{1}{2}} 0 0 {\frac{1}{2}}, \, \, \T_{54} = \chs 0 {\frac{1}{2}} 0 0
{\frac{1}{2}} 0 , \, \, \T_{55} =  \chs {\frac{1}{2}} 0 {\frac{1}{2}} 0 0 {\frac{1}{2}},  \, \,  \T_{56} =
\chs {\frac{1}{2}} {\frac{1}{2}} {\frac{1}{2}} {\frac{1}{2}} {\frac{1}{2}} {\frac{1}{2}},\\
& \T_{57} = \chs {\frac{1}{2}} {\frac{1}{2}} 0 {\frac{1}{2}} 0 0 , \, \, \T_{58} =  \chs {\frac{1}{2}}
{\frac{1}{2}} {\frac{1}{2}} 0 {\frac{1}{2}} 0, \, \,  \T_{59} = \chs {\frac{1}{2}} 0 {\frac{1}{2}}
{\frac{1}{2}} 0 0 , \, \, \T_{60} =  \chs {\frac{1}{2}} 0 0 {\frac{1}{2}} {\frac{1}{2}} {\frac{1}{2}} \\
&\T_{61} =  \chs {\frac{1}{2}} 0 {\frac{1}{2}} 0 {\frac{1}{2}} {\frac{1}{2}} , \, \,  \T_{62} = \chs 0 0
{\frac{1}{2}} {\frac{1}{2}} 0 {\frac{1}{2}}, \, \,   \T_{63} =  \chs 0 {\frac{1}{2}}
{\frac{1}{2}} {\frac{1}{2}} {\frac{1}{2}} 0 , \, \, \T_{64} =  \chs 0 {\frac{1}{2}} 0 {\frac{1}{2}} {\frac{1}{2}} {\frac{1}{2}} \\
\end{split}
\]
\end{small}
It can be shown that one of the corresponding even theta constants is zero. Let's pick $S =
\{1,2,3,4,5,6,7\}$ and $U = \{1,3,5,7\}.$ Let $T = U.$ Then, by Theorem ~\ref{vanishingProperty} the theta
constant corresponding to the characteristic $\e_T =\chs {\frac{1}{2}} {\frac{1}{2}} {\frac{1}{2}}
{\frac{1}{2}} 0 {\frac{1}{2}} $ is zero. That is $\T_{12}(0) = 0.$ Next, we  give the relation between theta
characteristics and branch points of the genus three hyperelliptic curve. Let $ B = \{a_1 , a_2 , a_3 , a_4 ,
a_5 , 1 , 0\}$ be the finite branch points of the curves and $U = \{a_1, a_3, a_5, 0\}$ be the set of odd
branch points.
\begin{lem}
Any genus 3 hyperelliptic curve is isomorphic to a curve given by the equation
\[Y^2=X(X-1)(X-a_1)(X-a_2)(X-a_3)(X-a_4)(X-a_5), \] where
\begin{small}
\[ a_1 =\frac{\T_{31}^2\T_{21}^2}{\T_{34}^2\T_{24}^2}, \, \,  a_2=
\frac{\T_{31}^2\T_{13}^2}{\T_{9}^2\T_{24}^2}, \, \,  a_3 = \frac{\T_{11}^2\T_{31}^2}{\T_{24}^2\T_{6}^2}, \,
\,   a_4 = \frac{\T_{21}^2\T_{7}^2}{\T_{15}^2\T_{34}^2}, \, \, a_5=
\frac{\T_{13}^2\T_{1}^2}{\T_{26}^2\T_{9}^2}.
\]
\end{small}
\end{lem}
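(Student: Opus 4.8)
The plan is to imitate the proof of Picard's lemma in the genus~2 case, combining the normal-form reduction with the vanishing criterion of Theorem~\ref{vanishingProperty} and Thomae's formula (Lemma~\ref{Thomae}). First I would record that a genus~3 hyperelliptic curve has $2g+2=8$ distinct Weierstrass points on $\P^1$; sending three of them to $0,1,\infty$ by a M\"obius transformation puts the curve in the claimed form $Y^2=X(X-1)(X-a_1)\cdots(X-a_5)$, which settles the ``is isomorphic to'' assertion. Then, exactly as in the statement, fix the finite branch set $B=\{a_1,\dots,a_5,1,0\}$ with index set $S=\{1,\dots,7\}$, the extra point $\infty$ with $\e(\infty)=0$, and the odd set $U=\{1,3,5,7\}$ (the indices of the odd branch points $a_1,a_3,a_5,0$). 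Using the map $\e$ of the preliminaries, extended by $\e_T=\sum_{k\in T}\e(k)$, together with $\e_T=\e_{T^c}$, every half-integer characteristic is $\e_T$ for some $T\subset S$ of the appropriate parity.

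The heart of the argument is the identification, for each of the twelve theta constants appearing in the formula --- $\T_1,\T_6,\T_7,\T_9,\T_{11},\T_{13},\T_{15},\T_{21},\T_{24},\T_{26},\T_{31},\T_{34}$ --- of the subset $T\subset S$ of even cardinality (up to complementation) whose image $\e_T$ is the listed characteristic of that $\T_i$, and the verification via Theorem~\ref{vanishingProperty} that $\#(T\triangle U)=g+1=4$, so that each of these is a genuinely nonzero even theta constant to which Thomae's formula may be applied. For each such characteristic, Lemma~\ref{Thomae} writes $\T_i(0;\t)^8$ as a constant $A$ times the product of the squared differences of the branch points within the two size-$4$ blocks of the associated partition of $\{1,\dots,7,\infty\}$. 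This product is automatically a perfect square, and the three factors involving $\infty$ drop out (into the constant), so that $\T_i^4$ equals a constant times a product of nine linear factors in $a_1,\dots,a_5,1$ --- the genus~3 analogue of Eq.~\eqref{Thomaeg=2}.

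Substituting these expressions into the three proposed ratios finishes the proof. The twelve partitions are chosen so that in each ratio the constant $A$ cancels --- two copies above against two below --- and the linear difference-factors cancel in pairs, the only survivors collapsing to $a_i^2$: explicitly one checks $\bigl(\T_{31}^2\T_{21}^2/(\T_{34}^2\T_{24}^2)\bigr)^2=a_1^2$ and the analogous identities for $a_2,\dots,a_5$. Extracting square roots introduces a sign in each $a_i$; as in Picard's lemma, every admissible choice of signs yields a curve isomorphic to $\X$ --- the ambiguity amounts to relabelling branch points, equivalently to the action of $Sp_{2g}(\Z)$ on characteristics --- so one may select the determination displayed in the statement. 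I expect the main obstacle to be precisely this bookkeeping: reading off the twelve partitions correctly from $\e$, confirming nonvanishing in each case, and checking that the engineered cancellations really do reduce each ratio to $a_i^2$ --- a symbolic computation, as the introduction anticipates, rather than a step that requires a genuinely new idea.
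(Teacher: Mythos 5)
Your proposal follows essentially the same route as the paper's proof: apply Thomae's formula to write the relevant (nonvanishing, even) theta constants' fourth powers as products of differences of the ordered branch points $a_1,\dots,a_5,0,1,\infty$, observe that the chosen ratios collapse to $a_i^2$, and then fix one square root since all choices give isomorphic curves. The extra steps you spell out (the M\"obius normalization and the nonvanishing check via Theorem~\ref{vanishingProperty}) are implicit bookkeeping in the paper, not a different argument.
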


\proof By using Lemma~\ref{Thomae} we have the following set of equation of theta constants and branch points
which are ordered  $a_1, a_2, a_3, a_4, a_5, 0, 1, \infty$. We use the notation $(i,j)$ for $(a_i-a_j)$.
\begin{small}
\[
\begin{split}
{\T_{{1}}}^{4} & = A \,\left(1,6 \right) \left(3,6 \right) \left(5,6 \right) \left( 1 ,3 \right)  \left( 1 ,5
\right) \left( 3, 5 \right) \left( 2 ,4
\right) \left( 2 ,7 \right)  \left( 4 ,7 \right) \\
{\T_{{2}}}^{4} & = A \, \left( 3,6 \right) \left( 5,6 \right) \left( 3 ,5 \right)  \left( 1 , 2 \right)
\left( 1 ,4
\right) \left(2 ,4 \right) \left( 3 , 7 \right)  \left( 5 ,7 \right) \\
{\T_{{3}}}^{4} & = A \, \left( 3,6 \right)\left( 4,6 \right) \left( 3 , 4 \right) \left( 1, 2 \right) \left(
1 , 5
\right) \left( 2 ,5 \right) \left( 1 ,7 \right) \left( 2,7 \right)  \left( 5 ,7 \right) \\
{\T_{{4}}}^{4} & = A \, \left( 2,6 \right)\left( 3,6 \right)\left(5,6  \right) \left( 2 ,3 \right) \left( 2
,5 \right)
\left( 3 ,5 \right) \left(1, 4 \right)  \left( 1 ,7 \right)  \left( 4 ,7 \right) \\
{\T_{{5}}}^{4} & = A \, \left(4,6  \right)\left( 5,6 \right) \left( 4,5 \right) \left( 1 ,2 \right) \left(1,
3 \right)
\left( 2 ,3\right) \left( 1 , 7 \right) \left( 2 ,7 \right)  \left( 3 ,7 \right) \\
{\T_{{6}}}^{4} & = A \, \left( 1,6 \right) \left( 2,6 \right) \left( 3 ,4 \right)  \left( 3 ,5 \right) \left(
4 ,5 \right) \left( 1 ,2
\right) \left( 1 , 7 \right)  \left( 2 ,7 \right) \\
{\T_{{7}}}^{4} & = A \, \left( 2,6 \right)\left( 3,6 \right)\left( 4,6 \right) \left( 1 ,5 \right) \left( 2
,3 \right) \left( 2 ,4 \right)  \left( 3 ,4
\right)  \left( 1 , 7 \right) \left( 5 ,7 \right) \\
{\T_{{8}}}^{4} & = A \, \left( 2,6 \right)\left( 3,6 \right) \left( 2 ,3 \right) \left( 1 , 4 \right) \left(
1 ,5 \right)  \left( 4 ,5
\right)  \left( 1 ,7 \right) \left( 4 , 7 \right) \left( 5 ,7 \right) \\
{\T_{{9}}}^{4} & = A \, \left( 1,6 \right)\left( 3,6 \right) \left( 1 ,3 \right)  \left( 2 ,4 \right) \left(
2 ,5 \right) \left( 4 ,5
\right) \left( 1, 7 \right) \left( 3 ,7 \right) \\
{\T_{{10}}}^{4} & = A \,\left( 3,6 \right) \left( 5,6 \right) \left( 3 ,5 \right)  \left( 1 ,2 \right) \left(
1, 4 \right) \left( 2 ,4
\right) \left( 1 ,7 \right)  \left( 2 ,7 \right) \left( 4 , 7 \right) \\
{\T_{{11}}}^{4} & = A \, \left(3,6  \right)\left( 4,6 \right) \left( 5,6 \right) \left( 3 ,4 \right) \left( 3
,5 \right)  \left( 4 ,5 \right) \left(
1 ,2 \right)  \left( 1, 7 \right)  \left( 2 ,7 \right) \\
{\T_{{13}}}^{4} & = A \, \left( 2,6 \right)\left(  4,6\right)\left(  5,6\right) \left( 1 ,3 \right) \left( 2
,4 \right) \left( 2 ,5 \right) \left( 4 ,5
\right) \left( 1 ,7 \right) \left( 3 ,7 \right) \\
{\T_{{14}}}^{4} & = A \, \left( 2,6 \right)\left( 5,6 \right) \left( 2 ,5 \right) \left( 1 ,3 \right) \left(
1 , 4 \right)  \left( 3 ,4
\right)  \left( 1 ,7 \right) \left( 3 ,7 \right) \left( 4 ,7 \right) \\
{\T_{{15}}}^{4} & = A \, \left( 1,6 \right) \left( 5,6 \right) \left( 1 ,5 \right) \left( 2 ,3 \right) \left(
2 ,4 \right) \left( 3 ,4
\right) \left( 1 , 7 \right) \left( 5 ,7 \right) \\
{\T_{{16}}}^{4} & = A \, \left( 1,6 \right) \left( 2 ,3 \right)  \left( 2 ,4 \right) \left( 2 ,5 \right)
\left( 3 ,4 \right) \left( 3 ,5 \right) \left( 4 , 5 \right) \left( 1 ,7 \right)
\\
{\T_{{17}}}^{4} & = A \, \left( 1,6 \right) \left( 4,6 \right) \left( 2 ,3 \right) \left( 2 ,5 \right) \left(
3 ,5 \right) \left( 1 , 4
\right) \left( 1 ,7 \right) \left( 4 ,7 \right) \\
{\T_{{18}}}^{4} & = A \, \left(2,6  \right) \left( 4,6 \right) \left( 1 ,3 \right) \left( 1 ,5 \right) \left(
3 ,5 \right) \left( 2 ,4
\right) \left( 1 ,7 \right) \left( 3 ,7 \right) \left( 5, 7 \right) \\
{\T_{{19}}}^{4} & = A \, \left( 3,6 \right)\left( 4,6 \right) \left( 1 ,2 \right) \left( 1 ,5 \right) \left(
2 ,5 \right) \left( 3 ,4
\right)  \left( 3 , 7 \right) \left( 4 , 7 \right) \\
{\T_{{20}}}^{4} & = A \, \left(  2,6\right) \left( 1 ,3 \right) \left( 1 , 4 \right) \left( 1 , 5 \right)
\left( 3 ,4 \right) \left( 3 ,5 \right) \left( 4 , 5 \right) \left( 2 , 7 \right)
\\
{\T_{{21}}}^{4} & = A \, \left( 1,6 \right)\left( 4,6 \right)\left( 5,6 \right) \left( 1 ,4 \right) \left( 1
,5 \right) \left( 4 ,5 \right) \left( 2 ,3
\right) \left( 2 , 7 \right) \left( 3 ,7 \right) \\
{\T_{{22}}}^{4} & = A \, \left( 1,6 \right)\left( 3,6 \right)\left( 4,6 \right) \left( 1 , 3 \right) \left( 1
, 4 \right) \left( 3 ,4 \right) \left( 2 ,5
\right)  \left( 2 , 7 \right) \left( 5 , 7 \right) \\
{\T_{{23}}}^{4} & = A \, \left( 1,6 \right)\left( 2,6 \right) \left(3 ,4 \right)  \left( 3 ,5 \right) \left(
4 , 5 \right) \left( 1 ,2
\right)  \left( 3 , 7 \right)  \left( 4, 7 \right) \left( 5, 7 \right) \\
{\T_{{24}}}^{4} & = A \, \left( 4,6 \right)\left( 5,6 \right) \left( 1 ,2 \right) \left( 1 ,3 \right) \left(
2, 3 \right) \left( 4 ,5
\right) \left( 4 , 7 \right) \left( 5 ,7 \right) \\
{\T_{{25}}}^{4} & = A \, \left( 3,6 \right) \left( 1 ,2 \right)  \left( 1 ,4 \right) \left( 1 ,5 \right)
\left( 2 ,4 \right) \left( 2 ,5 \right)  \left( 4 ,5 \right) \left( 3 ,7 \right) \\
{\T_{{26}}}^{4} & =  A \, \left( 2,6 \right)\left( 4,6 \right) \left( 1 ,3 \right) \left( 1 ,5 \right) \left(
3 ,5 \right) \left( 2 ,4
\right) \left( 2 ,7 \right) \left( 4 ,7 \right) \\
{\T_{{27}}}^{4} & = A \, \left( 1,6 \right)\left( 5,6 \right) \left( 1 ,5 \right)  \left( 2 ,3 \right) \left(
2 ,4 \right) \left( 3 ,4
\right)  \left( 2 ,7 \right)  \left( 3 ,7 \right) \left( 4 , 7 \right) \\
{\T_{{28}}}^{4} & = A \, \left( 1,6 \right)\left( 3,6 \right) \left( 1 ,3 \right) \left( 2 ,4 \right) \left(
2 ,5 \right) \left( 4 ,5
\right)  \left( 2 ,7 \right)  \left( 4 ,7 \right) \left( 5 ,7 \right) \\
{\T_{{29}}}^{4} & = A \, \left( 1,6 \right)\left( 2,6 \right)\left( 4,6 \right) \left( 3 ,5 \right) \left( 1
,2 \right) \left( 1 ,4 \right) \left( 2, 4
\right) \left( 3, 7 \right)  \left( 5 ,7 \right) \\
{\T_{{30}}}^{4} & = A \, \left(5,6  \right) \left( 1 ,2 \right) \left( 1,3 \right) \left( 1 ,4 \right) \left(
2, 3 \right) \left( 2 ,4 \right) \left( 3,4 \right) \left( 5 , 7 \right) \\
{\T_{{31}}}^{4} & = A \, \left( 1,6 \right)\left( 2,6 \right)\left(  3,6 \right) \left( 1, 2 \right) \left( 1
,3 \right) \left( 2 ,3 \right) \left( 4 ,5
\right) \left( 4 ,7 \right) \left( 5 ,7 \right) \\
\end{split}
\]

\[
\begin{split}
{\T_{{32}}}^{4} & = A \, \left( 1,6 \right)\left( 4,6 \right) \left( 2 ,3 \right) \left( 2, 5 \right) \left(
3 ,5 \right) \left( 1 , 4
\right) \left( 2 , 7 \right) \left( 3, 7 \right) \left( 5 , 7 \right) \\
{\T_{{33}}}^{4} & = A  \, \left( 2,6 \right)\left( 5,6 \right) \left( 1 ,3 \right) \left( 1 ,4 \right) \left(
3, 4 \right) \left( 2 ,5
\right) \left( 2 , 7 \right) \left( 5 ,7 \right) \\
{\T_{{34}}}^{4} & = A \, \left(  2,6\right)\left( 3,6 \right) \left( 1 ,4 \right)  \left( 1 , 5 \right)
\left( 4, 5 \right) \left( 2 ,3
\right) \left( 2 ,7 \right) \left( 3 , 7 \right) \\
{\T_{{35}}}^{4} & = A \, \left( 4,6 \right) \left( 1 ,2 \right) \left( 1 ,3 \right) \left( 1, 5 \right)
\left( 2, 3 \right) \left( 2 ,5 \right) \left( 3, 5 \right) \left( 4 ,7 \right)
\\
{\T_{{36}}}^{4} & = A \, \left(  1,6\right)\left( 2,6 \right)\left( 5,6 \right) \left( 1 ,2 \right) \left( 1
,5 \right) \left( 2 ,5 \right) \left( 3 ,4
\right) \left( 3, 7 \right) \left( 4 ,7 \right) \\
\end{split}
\]
\end{small}

\noindent By using the set of equations given above we have several choices for $a_1,\cdots,a_5$ in terms of
theta constants.
\begin{center}
\begin{tabular}{c c c c}
Branch Points &  \multicolumn {3}{c} {Possible Ratios}\\[5pt]
$a_1^2$ & $\left(\frac{\T_{36}^2\T_{22}^2}{\T_{33}^2\T_{19}^2}\right)^2$ &
$\left(\frac{\T_{31}^2\T_{21}^2}{\T_{34}^2\T_{24}^2}\right)^2$ &
$\left(\frac{\T_{29}^2\T_{1}^2}{\T_{26}^2\T_{2}^2}\right)^2$ \\
%
 $a_2^2$ & $\left(\frac{\T_{4}^2\T_{29}^2}{\T_{2}^2\T_{17}^2}\right)^2$ & $\left(\frac{\T_{36}^2\T_{7}^2}{\T_{15}^2\T_{19}^2}\right)^2$
&
$\left(\frac{\T_{31}^2\T_{13}^2}{\T_{9}^2\T_{24}^2}\right)^2$ \\
$a_3^2$ & $\left(\frac{\T_{4}^2\T_{22}^2}{\T_{33}^2\T_{17}^2}\right)^2$ &
$\left(\frac{\T_{11}^2\T_{31}^2}{\T_{24}^2\T_{6}^2}\right)^2$ &
$\left(\frac{\T_{7}^2\T_{1}^2}{\T_{26}^2\T_{15}^2}\right)^2$ \\
$a_4^2$ & $\left(\frac{\T_{11}^2\T_{29}^2}{\T_{2}^2\T_{6}^2}\right)^2$ &
$\left(\frac{\T_{21}^2\T_{7}^2}{\T_{15}^2\T_{34}^2}\right)^2$ &
$\left(\frac{\T_{22}^2\T_{13}^2}{\T_{9}^2\T_{33}^2}\right)^2$ \\
$a_5^2$ & $\left(\frac{\T_{4}^2\T_{21}^2}{\T_{34}^2\T_{17}^2}\right)^2$ &
$\left(\frac{\T_{11}^2\T_{36}^2}{\T_{19}^2\T_{6}^2}\right)^2$ &
$\left(\frac{\T_{13}^2\T_{1}^2}{\T_{26}^2\T_{9}^2}\right)^2$ \\
\end{tabular}
\end{center}
%
Let's select the following choices for $a_1, \cdots, a_5$.
\[
a_1 =  \frac{\T_{31}^2\T_{21}^2}{\T_{34}^2\T_{24}^2},  \, \,
 a_2=
\frac{\T_{31}^2\T_{13}^2}{\T_{9}^2\T_{24}^2}, \, \,  a_3 =\frac{\T_{11}^2\T_{31}^2}{\T_{24}^2\T_{6}^2},  \,
\, a_4 = \frac{\T_{21}^2\T_{7}^2}{\T_{15}^2\T_{34}^2},  \quad a_5=
\frac{\T_{13}^2\T_{1}^2}{\T_{26}^2\T_{9}^2}.
\]
This completes the proof.
\endproof

\begin{rem}
Unlike the genus 2 case, here only $\T_1,$ $ \T_6,$ $ \T_7,$ $\T_{11},$ $ \T_{15},$ $ \T_{24},$ $ \T_{31}$
are from one of the G\"opel groups.
\end{rem}

\subsubsection{Genus 3 non-hyperelliptic cyclic curves}
Using the Thomae's like formula for cyclic curves, for   each cyclic curve $y^n=f(x)$ one can express the roots of
$f(x)$ in terms of ratios of theta functions as in the hyperelliptic case. In this section we study such curves for
$g=3$. We only consider the families of curves with positive dimension since the curves which belong to 0-dimensional
families are well known. The proof of the following lemma can be found in \cite{RF}.
\begin{lem}\label{Shiga}
Let $f$ be a meromorphic function on $\X,$ and let $$(f) = \sum_{i=1}^{m}b_i - \sum_{i=1}^{m}c_i $$ be the divisor
defined by $f.$ Let's take paths from $P_0$ (initial point) to $b_i$ and $P_0$ to $c_i$ so that $\sum_{i=1}^{m}
\int_{P_0}^{b_i} \om = \sum_{i=1}^{m} \int_{P_0}^{c_i} \om .$

For an effective divisor $P_1 + \cdots + P_g$ we have
\begin{equation}
f(P_1) \cdots f(P_g) = \frac{1}{E} \prod_{k=1}  \frac{\T(\sum_{i} \int_{P_0}^{P_i} \om - \int_{P_0}^{b_k} \om -
\triangle , \tau )} {\T(\sum_{i} \int_{P_0}^{P_i} \om - \int_{P_0}^{c_k} \om - \triangle , \tau )}
\end{equation}
where $E$ is a constant independent of $P_1, \dots , P_g,$ the integrals from $P_0$ to $P_i$ take the same paths both
in the numerator and in the denominator, $\triangle$ denotes the Riemann's constant and $\int_{P_0}^{P_i} \om = \left(
\int_{P_0}^{P_i} \om_1, \dots, \int_{P_0}^{P_i} \om_g \right)^t.$
\end{lem}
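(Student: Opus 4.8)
The plan is to establish this by the classical device of comparing divisors of meromorphic functions on $\X$; the identity is a Thomae-type formula whose proof is of the kind going back to Fay. Write $\varphi(P) = \int_{P_0}^{P}\om \in \C^g$ for the Abel--Jacobi map attached to the chosen base point $P_0$ and symplectic basis, and for an effective divisor $D = P_1 + \cdots + P_g$ put $\varphi(D) = \sum_{i}\varphi(P_i)$, the integration paths being those specified in the statement. Set
\[
\Phi(D) \;=\; \prod_{k=1}^{m}\frac{\T\bigl(\varphi(D) - \varphi(b_k) - \triangle,\ \tau\bigr)}{\T\bigl(\varphi(D) - \varphi(c_k) - \triangle,\ \tau\bigr)},
\]
so that the assertion is $\Phi(D) = E\cdot\prod_{i=1}^{g}f(P_i)$ for a constant $E$ independent of $D$ (the reciprocal of the displayed formula). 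Observe first that since $(f) = \sum_i b_i - \sum_i c_i$ is principal, Abel's theorem already gives $\sum_i\varphi(b_i) \equiv \sum_i\varphi(c_i)$ modulo the period lattice; the hypothesis of the lemma is that the paths have been chosen so that this becomes an honest equality in $\C^g$, and that exact equality is exactly what is needed to make $\Phi$ single-valued.

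Since both $\Phi(D)$ and $\prod_i f(P_i)$ are symmetric in $P_1, \dots, P_g$, it suffices to fix $P_2, \dots, P_g$ in general position and compare the two sides as functions of the single point $P := P_1$ on $\X$. On the right, $\prod_i f(P_i) = f(P)\cdot\prod_{i\geq 2}f(P_i)$ is, up to the nonzero constant $\prod_{i\geq 2}f(P_i)$, just $f$, whose divisor is $(f) = \sum_k b_k - \sum_k c_k$. For the left side, put $u := \sum_{j=2}^{g}\varphi(P_j) - \triangle$, so the $k$-th numerator factor is $\T\bigl(\varphi(P) + u - \varphi(b_k),\ \tau\bigr)$. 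By the quasi-periodicity of $\T$ recalled in Section~2, as $P$ traverses a closed loop the argument changes by a lattice vector and each theta factor acquires an exponential automorphy factor; in the quotient $\prod_k(\text{numerator})/\prod_k(\text{denominator})$ the part of that factor depending on $\varphi(P)$ occurs $m$ times in the numerator and $m$ times in the denominator and cancels, while the residual constant phase involves precisely $\sum_k\varphi(b_k) - \sum_k\varphi(c_k)$, which vanishes by the hypothesis. Hence $\Phi$ descends to a single-valued meromorphic function of $P$ on $\X$ (in particular independent of the chosen path from $P_0$ to $P$, which yields the path-independence claimed in the statement).

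Next I would compute the divisor $(\Phi)$ in the variable $P$ by Riemann's vanishing theorem. For $P_2, \dots, P_g$ generic the function $P \mapsto \T\bigl(\varphi(P) + u - \varphi(b_k),\ \tau\bigr)$ is not identically zero and has $g$ simple zeros. One checks directly that $P = b_k$ is one of them, since the argument then becomes $\T\bigl(\varphi(P_2 + \cdots + P_g) - \triangle,\ \tau\bigr) = 0$ because $\varphi(P_2 + \cdots + P_g)$ lies in the image of the $(g-1)$-fold symmetric product of $\X$ under $\varphi$; and, because the sum of the Abel images of the $g$ zeros of $\T(\varphi(P) - e,\tau)$ equals $e$ up to an additive constant depending only on $P_0$ and the basis, the remaining $g-1$ zeros form an effective divisor of degree $g-1$ whose linear equivalence class is independent of $k$ and coincides with the class of the analogous ``extra'' divisor coming from each denominator factor $\T\bigl(\varphi(P) + u - \varphi(c_k),\tau\bigr)$. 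For $P_2, \dots, P_g$ generic this common class has a unique effective representative, so all these extra divisors are literally equal and cancel in the quotient, leaving $(\Phi) = \sum_k b_k - \sum_k c_k = (f)$. Therefore $\Phi(D)/\prod_i f(P_i)$, regarded in the variable $P_1$, is a meromorphic function on $\X$ with neither zeros nor poles, hence a constant; running the same argument in each of the remaining variables shows it is separately constant in every $P_i$, hence globally constant on the connected domain $\X^g$. Call this constant $E$, necessarily finite and nonzero; rearranging gives the formula, and the identity, proved so far for $D$ with distinct points in general position, extends to every effective divisor $P_1 + \cdots + P_g$ by continuity of both sides.

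The step I expect to be the main obstacle is the divisor computation of the third paragraph: one must pin down the precise normalization of the Riemann constant $\triangle$ (attached to $P_0$ and the symplectic basis) so that Riemann's theorem exhibits the zero set of each theta factor in the form ``$b_k$ together with a $k$-independent effective $(g-1)$-divisor'' rather than some translate of it, and one must justify the genericity hypotheses — non-speciality of the relevant degree-$(g-1)$ class, so the extra divisors are unique — together with the final limiting argument. All of this is standard in the theory of theta functions on Riemann surfaces, and full details appear in the reference cited.
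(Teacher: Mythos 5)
The paper offers no proof of this lemma at all --- it simply cites Rauch--Farkas \cite{RF} --- and your argument (single-valuedness of the theta quotient via quasi-periodicity together with the exact path condition, divisor comparison in one variable via Riemann's vanishing theorem, then symmetry, constancy in each variable, and a continuity extension) is precisely the standard proof found in that reference. It is correct as proposed; the points you flag --- the normalization of the Riemann constant $\triangle$ and the genericity/non-speciality needed for the extra degree-$(g-1)$ divisors to coincide and cancel --- are routine and do not hide any real obstacle.
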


Notice that the definition of thetanulls is different in this part from the definitions of the hyperelliptic
case. We define the following three theta constants.
\[
\T_1 = \T\chs{0}{\frac{1}{6}}{0}{\frac{2}{3}}{\frac{1}{6}}{\frac{2}{3}} \quad \T_2 = \T
\chs{0}{\frac{1}{6}}{0}{\frac{1}{3}}{\frac{1}{6}}{\frac{1}{3}} \quad \T_3 = \T
\chs{0}{\frac{1}{6}}{0}{0}{\frac{1}{6}}{0}
\]
Next we consider the cases 16, 18, 19 from Table~\ref{g3}.\\

\noindent \textbf{Case 18:} If the automorphism group is $C_3$ then the equation of $\X$ is given by
\[y^3=x(x-1)(x-s)(x-t).\]
Let $Q_i$ where $i= 1..5$ be ramifying points in the fiber of $0,1,s,t,\infty$ respectively. Consider the
meromorphic function $f = x$ on $\X$ of order 3. Then we have $(f) = 3 Q_1 - 3 Q_5.$ By applying the
Lemma~\ref{Shiga} with $P_0 = Q_5$ and an effective divisor $2Q_2 + Q_3$ we have the following.

\begin{equation} \label{Shiga1}
E s = \prod_{k=1}^3  \frac{\T( 2\int_{Q_5}^{Q_2} \om  + \int_{Q_5}^{Q_3} \om - \int_{Q_5}^{b_k} \om -
\triangle , \tau )} {\T( 2\int_{Q_5}^{Q_2} \om  + \int_{Q_5}^{Q_3} \om  - \triangle , \tau )}
\end{equation}
Again apply the Lemma~\ref{Shiga} with an effective divisor $Q_2 + 2Q_3$ we have the following.

\begin{equation} \label{Shiga2}
E s^2 = \prod_{k=1}^3  \frac{\T( \int_{Q_5}^{Q_2} \om  + 2\int_{Q_5}^{Q_3} \om - \int_{Q_5}^{b_k} \om -
\triangle , \tau )} {\T( \int_{Q_5}^{Q_2} \om  + 2\int_{Q_5}^{Q_3} \om  - \triangle , \tau )}
\end{equation}

By dividing Eq.~\eqref{Shiga2} by Eq.~\eqref{Shiga1} we have,
\begin{equation} \label{s}
\begin{split}
s =&  \prod_{k=1}^3  \frac{\T( \int_{Q_5}^{Q_2} \om  + 2\int_{Q_5}^{Q_3} \om - \int_{Q_5}^{b_k} \om -
\triangle
, \tau )} {\T( \int_{Q_5}^{Q_2} \om  + 2\int_{Q_5}^{Q_3} \om  - \triangle , \tau )}  \\
   & \times \prod_{k=1}^3  \frac {\T( 2\int_{Q_5}^{Q_2} \om  + \int_{Q_5}^{Q_3} \om  - \triangle , \tau )}{\T( 2\int_{Q_5}^{Q_2} \om
   + \int_{Q_5}^{Q_3} \om - \int_{Q_5}^{b_k} \om - \triangle
, \tau )}
\end{split}
\end{equation}
By a similar argument we have
\begin{equation} \label{t}
\begin{split}
t =&  \prod_{k=1}^3  \frac{\T( \int_{Q_5}^{Q_2} \om  + 2\int_{Q_5}^{Q_4} \om - \int_{Q_5}^{b_k} \om -
\triangle
, \tau )} {\T( \int_{Q_5}^{Q_2} \om  + 2\int_{Q_5}^{Q_4} \om  - \triangle , \tau )}  \\
   & \times \prod_{k=1}^3  \frac {\T( 2\int_{Q_5}^{Q_2} \om  + \int_{Q_5}^{Q_4} \om  - \triangle , \tau )}{\T( 2\int_{Q_5}^{Q_2} \om
   + \int_{Q_5}^{Q_4} \om - \int_{Q_5}^{b_k} \om - \triangle , \tau )}
\end{split}
\end{equation}
Computing the right hand side of Eq.~\eqref{s} and Eq.~\eqref{t} was the one of the main points of \cite{SH}.
Finally, we have
\[s = \frac{\T_2^3}{\T_1^3}, \textit{ and } \, \,  r = \frac{\T_3^3}{\T_1^3}.\]

\noindent \textbf{Case 19:} If the group is $C_6$ then the equation is $y^3=x(x-1)(x-s)(x-t)$ with $s = 1-t.$
By using results from  case 18, we have \[\T_2^3 = \T_1^3 - \T_3^3.\]

\noindent \textbf{Case 16:}  
In this case the equation of $\X$ is given by $$y^4=x(x-1)(x-t).$$ This curve has 4 ramifying points $Q_i$
where $i= 1..4$ in the fiber of $0,1,t,\infty$ respectively. Consider the meromorphic function $f = x$ on
$\X$ of order 4. Then we have $(f) = 4 Q_1 - 4 Q_4.$ By applying the Lemma~\ref{Shiga} with $P_0 = Q_4$ and
an effective divisor $2Q_2 + Q_3$ we have the following.

\begin{equation} \label{Shiga3}
E t = \prod_{k=1}^4  \frac{\T( 2\int_{Q_4}^{Q_2} \om  + \int_{Q_4}^{Q_3} \om - \int_{Q_4}^{b_k} \om -
\triangle , \tau )} {\T( 2\int_{Q_4}^{Q_2} \om  + \int_{Q_4}^{Q_3} \om  - \triangle , \tau )}
\end{equation}
Again apply the Lemma~\ref{Shiga} with an effective divisor $Q_2 + 2Q_3$ we have the following.

\begin{equation} \label{Shiga4}
E t^2 = \prod_{k=1}^4  \frac{\T( \int_{Q_4}^{Q_2} \om  + 2\int_{Q_4}^{Q_3} \om - \int_{Q_4}^{b_k} \om -
\triangle , \tau )} {\T( \int_{Q_4}^{Q_2} \om  + 2\int_{Q_4}^{Q_3} \om  - \triangle , \tau )}
\end{equation}

We have the following by dividing Eq.~\eqref{Shiga4} by Eq.~\eqref{Shiga3}
\begin{equation}\label{last}
\begin{split}
t =& \prod_{k=1}^4  \frac{\T( \int_{Q_4}^{Q_2} \om  + 2\int_{Q_4}^{Q_3} \om - \int_{Q_4}^{b_k} \om -
\triangle , \tau )} {\T( \int_{Q_4}^{Q_2} \om  + 2\int_{Q_4}^{Q_3} \om  - \triangle , \tau )}   \\
& \times \prod_{k=1}^4 \frac{\T( 2\int_{Q_4}^{Q_2} \om  + \int_{Q_4}^{Q_3} \om  - \triangle , \tau )}  {\T(
2\int_{Q_4}^{Q_2} \om  + \int_{Q_4}^{Q_3} \om - \int_{Q_4}^{b_k} \om - \triangle , \tau )}
\end{split}
\end{equation}

In order to compute the explicit formula for $t$ one has to find the integrals on the right hand side. Such
computations are long and tedious and we intend to include them in further work.

\begin{rem} In the case 16) of Table~\ref{g3}, the parameter $t$ is given by
\[
\T[e]^4 = A (t-1)^4 t^2,\]
where $[e]$ is the theta characteristics corresponding to the partition $(\{1\}, \{2\}, \{3\}, \{4\})$ and
$A$ is a constant; see \cite{NK} for details. However,   this is not satisfactory since we would like $t$ as
a rational function in terms of theta. The methods in \cite{NK} do not lead to another relation among $t$ and
the thetanulls since the only partition we could take  is the above.
\end{rem}

Summarizing all of the above we have:
\begin{lem}
Let $\X$ be a non-hyperelliptic genus 3 curve. The following are true:
\begin{description}
\item [i)] If $\Aut (\X) \iso C_3$, then $\X$ is isomorphic to a curve with equation
\[ y^3 = x (x-1) \left(x-\frac{\T_2^3}{\T_1^3}\right) \left(x- \frac{\T_3^3}{\T_1^3}\right).\]

\item [ii)] If $\Aut (\X) \iso C_6$, then $\X$ is isomorphic to a curve with equation
\[ y^3 = x (x-1) \left(x-\frac{\T_2^3}{\T_1^3}\right) \left(x- \frac{\T_3^3}{\T_1^3}\right) \,\,\textit{with}\,\,  \T_2^3 = \T_1^3 -
\T_3^3.\]
\item [iii)] If $\Aut (\X)$ is isomorphic to the group with GAP identity $(16, 13)$, then $\X$ is isomorphic to a curve with equation
\[y^4 = x (x-1)(x-t)\,\, \textit{with}\,\,\]
where $t$ is given by Eq.~\eqref{last}.
\end{description}
\end{lem}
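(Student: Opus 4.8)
The plan is to collect the computations already carried out for Cases~18, 19, and 16 of Table~\ref{g3}. Each of the three groups $C_3$, $C_6$ and the group with GAP identity $(16,13)$ singles out a normal form for a birational model of $\X$: respectively $y^3=x(x-1)(x-s)(x-t)$, the same equation subject to $s=1-t$, and $y^4=x(x-1)(x-t)$, as read off from Table~\ref{g3} (with the classification of \cite{MS}). It then remains only to substitute, for the moduli parameters $s,t$, the thetanull expressions obtained from the Thomae-type formula of Lemma~\ref{Shiga}.

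For part~(i), apply Lemma~\ref{Shiga} to the meromorphic function $f=x$ on $\X$; it has order $3$ with divisor $(f)=3Q_1-3Q_5$, and we take $P_0=Q_5$. Evaluating the formula on the effective divisors $2Q_2+Q_3$ and $Q_2+2Q_3$ gives $Es$ and $Es^2$, i.e.\ Eqs.~\eqref{Shiga1}--\eqref{Shiga2}; dividing the second by the first cancels the curve-dependent constant $E$ and yields the closed expression \eqref{s} for $s$. Repeating the computation with $Q_4$ in place of $Q_3$ gives \eqref{t} for $t$. Carrying out the integral evaluations---the images under the Abel map of the ramification points $Q_i$ and the Riemann constant $\triangle$, followed by simplification of the resulting products of theta values, as in \cite{SH}---collapses these to $s=\T_2^3/\T_1^3$ and $t=\T_3^3/\T_1^3$, which is~(i). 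Part~(ii) is then immediate: the $C_6$ locus sits inside the $C_3$ family as the subvariety $s=1-t$, so substituting the formulas just found gives $\T_2^3/\T_1^3=1-\T_3^3/\T_1^3$, that is, $\T_2^3=\T_1^3-\T_3^3$.

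For part~(iii) the argument is the same one degree higher: $f=x$ now has order $4$ with $(f)=4Q_1-4Q_4$, and applying Lemma~\ref{Shiga} with $P_0=Q_4$ to the divisors $2Q_2+Q_3$ and $Q_2+2Q_3$ produces $Et$ and $Et^2$ (Eqs.~\eqref{Shiga3}--\eqref{Shiga4}); dividing gives formula \eqref{last} for $t$. I expect the genuine obstacle to lie precisely at this step's integral evaluation: in contrast to the $C_3$ case there is essentially only one usable partition of the branch locus, so the Nakayashiki-type relations of \cite{NK} (cf.\ the Remark preceding this lemma) do not pin $t$ down as an explicit rational function of the thetanulls, and the periods $\int_{Q_4}^{Q_i}\om$ together with $\triangle$ are left unevaluated. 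Accordingly part~(iii) can only be asserted with $t$ given implicitly by \eqref{last}.
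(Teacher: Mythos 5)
Your proposal is correct and follows essentially the same route as the paper: the lemma is just a summary of the computations for Cases 18, 19, and 16, obtained by applying Lemma~\ref{Shiga} to $f=x$ on the divisors $2Q_2+Q_3$ and $Q_2+2Q_3$, dividing to eliminate $E$, invoking the evaluations of \cite{SH} to get $s=\T_2^3/\T_1^3$, $t=\T_3^3/\T_1^3$, specializing $s=1-t$ for $C_6$, and leaving $t$ implicit as Eq.~\eqref{last} in the $(16,13)$ case. Your remark that the integrals in part (iii) cannot be evaluated by the Nakayashiki-type relations matches the paper's own caveat.
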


It seems possible to generalize such techniques of computing the branch points in terms of the theta
functions for any cyclic cover of the projective line. We intend to pursue the ideas of these papers in
further work.

\medskip

\noindent \textbf{Acknowledgements:} The first ideas of this paper started during a visit of the second and
third author at Boston University during the Summer 2006. Both the second and third author want to thank
Prof. Previato for making that visit possible.

\vspace{.5cm}

\noindent Department of Mathematics and Statistics, \\
Boston University, \\
Boston,   MA 02215-2411\\
Email: ep@bu.edu

\vspace{.5cm}

\noindent Department of Mathematics and Statistics,\\
Oakland University, \\
546 Science and Eng. Building,\\
Rochester, MI 48309\\
Email: shaska@oakland.edu

\vspace{.5cm}

\noindent Department of Mathematics and Statistics,\\
Oakland University, \\
389 Science and Eng. Build.,\\
Rochester, MI 48309\\
Email: gswijesi@oakland.edu
\end{document}